\let\oldmarginpar\marginpar
\renewcommand\marginpar[1]{\-\oldmarginpar[\raggedleft\footnotesize #1]%
{\raggedright\footnotesize #1}}
\newtheorem{theoa}{Theorem}
\newtheorem{theorem}{Theorem}
\newtheorem*{theorema'}{Theorem A'}
\newtheorem*{theorem1'}{Theorem 1'}
\newtheorem*{theorem2'}{Theorem 2'}
\newtheorem*{theorem3'}{Theorem 3'}
\newtheorem*{theorem4'}{Theorem 4'}
\newtheorem{lemma}{Lemma}
\newtheorem*{proposition}{Proposition}
\theoremstyle{definition}
\newtheorem{definition}{Definition}
\theoremstyle{remark}
\newcommand{\Z}{\mathbb{Z}}
\newcommand{\R}{\mathbb{R}}
\newcommand{\C}{\mathbb{C}}
\def\L{\Lambda}
\def\l{\lambda}
\def\Z{\mathbb{Z}}
\def\R{\mathbb{R}}
\def\C{\mathbb{C}}
\def\1{\mathbf{1}}
\author{ANDREW AHN}
\address{Department of Mathematics, Massachusetts Institute of Technology}
\email{ajahn@mit.edu}
\author{WILLIAM CLARK}
\address{Department of Mathematics, University of Michigan}
\email{wiclark@umich.edu}
\author{Shahaf Nitzan}
\address{School of Mathematical Sciences, Georgia Institute of Technology}
\email{shahaf.nitzan@math.gatech.edu }
\author{JOSEPH SULLIVAN}
\address{Department of Physics, Yale University}
\email{joseph.m.sullivan@yale.edu}
\keywords{Time-frequency analysis, Gabor systems, Riesz bases, frames,
density}
\subjclass[2000]{Primary 42C15; Secondary 42A38}
\begin{document}

 \title{Density of Gabor systems via the short time Fourier transform}

 \begin{abstract}
 We apply a new approach to the study of the density of Gabor systems, and obtain a simple and straightforward proof to Ramanathan and Steger's well--known result regarding the density of Gabor frames and Gabor Riesz sequences. Moreover, this point of view allows us to extend this result in several directions. The approach we use was first observed by A.~Olevskii and the third author in their study of exponential systems, here we develop and simplify it further.
 \end{abstract}
 \maketitle
\section{introduction}

For $g \in L^2(\R)$ and $\L\subset\R^2$, the Gabor system generated by $g$ and $\L$ is
denoted by
\begin{equation}\label{gabor}
G(g, \L) := \{e^{2\pi i\mu t}g(t - \l)\}_{(\l,\mu)\in \L}.
\end{equation}
Such systems play a prominent role in time-frequency analysis and its applications. For example, they appear in the study of pseudo-differential operators and phase space concentration and are used in speech processing, the analysis of musical signals, wireless transmission and quantum mechanics. The study of these systems focuses on the interplay between the properties of $G(g,\L)$, the distribution of $\L$, and the
time-frequency localization of $g$.

In this paper we focus on questions of the following form: which conditions on the distribution of $\L$ are necessary for the system $G(g,\L)$ to have certain properties? In particular, we are interested in the conditions on $\L$ implied by the system $G(g,\L)$ being a Riesz basis, a frame (roughly speaking, an `over--complete' basis), a Riesz sequence
(an `under--complete' basis) and a uniformly minimal system (a very relaxed version of a Riesz sequence).

Such questions were first considered in this setting by Ramanathan and Steger who, in particular, obtained the following well known result \cite{RS} (for the definition of a uniformly discrete sequence see subsection 2.2).
\begin{theoa}$[$Ramanathan and Steger$]$
Let $g\in L^2(\R)$ and let $\L\subset\R^2$ be a uniformly discrete sequence.
\begin{itemize}
\item[1.] If $G(g,\L)$ is a Riesz sequence then
\begin{equation}\label{RS-RS}
D^+(\L):=\lim_{R\rightarrow\infty}\frac{\max_{(a,b)\in\R^2}|\L\cap Q_{(a,b)}(R)|}{(2R)^2}\leq 1,
\end{equation}
\item[2.] If $G(g,\L)$ is a frame then
\begin{equation}\label{RS-frame}
D^-(\L):=\lim_{R\rightarrow\infty}\frac{\min_{(a,b)\in\R^2}|\L\cap Q_{(a,b)}(R)|}{(2R)^2}\geq 1,
\end{equation}
\end{itemize}
where $Q_{(a,b)}(R)$ denotes the cube with center at $(a,b)$ and side length $2R$.
\end{theoa}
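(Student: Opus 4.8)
The plan is to transport the whole problem to the short-time Fourier transform $V_g f(\l,\mu)=\langle f,\phi_{\l,\mu}\rangle$, where $\phi_{\l,\mu}(t)=e^{2\pi i\mu t}g(t-\l)$, so that the Gabor coefficients are exactly the samples of $V_gf$ on $\L$. After normalizing $\norm{g}_2=1$, Moyal's identity makes $V_g\colon L^2(\R)\to L^2(\R^2)$ an isometry. The central device is the time-frequency localization operator $L_Q=\int_Q\langle\,\cdot\,,\phi_{x,\xi}\rangle\,\phi_{x,\xi}\,dx\,d\xi$ attached to a cube $Q=Q_{(a,b)}(R)$, where $\phi_{x,\xi}(t)=e^{2\pi i\xi t}g(t-x)$. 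Moyal's identity yields $0\le L_Q\le I$ and $\operatorname{tr}L_Q=|Q|=(2R)^2$, so $L_Q$ behaves like a projection onto a space of dimension $\approx(2R)^2$; this is the pool of ``degrees of freedom'' against which $|\L\cap Q|$ must be measured. I would first record the decisive spectral input: writing $\psi=|V_gg|^2$, one has $\operatorname{tr}(L_Q-L_Q^2)=\int_{\R^2}\psi(u)\,|Q\setminus(Q+u)|\,du$, and since $\psi\in L^1(\R^2)$ with $\int\psi=1$ (Moyal again) while $|Q\setminus(Q+u)|\le\min\{(2R)^2,\,2R|u|_1\}$, splitting the integral at $|u|_1=\sqrt R$ gives $\operatorname{tr}(L_Q-L_Q^2)=o(|Q|)$ uniformly in the center $(a,b)$, using only $g\in L^2$ and no decay whatsoever. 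Consequently the spectrum of $L_Q$ clusters at $0$ and $1$, and for fixed $\d\in(0,1)$ the spectral projection $P=\mathbf 1_{[1-\d,1]}(L_Q)$ has rank $d$ with $|Q|(1-o(1))\le d\le|Q|/(1-\d)$.

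For part~1, the Riesz lower bound $A$ is inherited by the subfamily indexed by $\L\cap Q$, so its Gram matrix $G$ satisfies $G\ge AI_n$ with $n=|\L\cap Q|$. I would split $G=\tilde G+E$, where $\tilde G$ is the Gram matrix of $\{P\phi_{\l,\mu}\}$ (so $\operatorname{rank}\tilde G\le d$) and $E\succeq0$ is the Gram matrix of $\{(I-P)\phi_{\l,\mu}\}$. Restricting $G=\tilde G+E$ to the kernel of $\tilde G$ gives $E\succeq AI$ there, whence $\operatorname{tr}E\ge A(n-d)$, i.e. $n\le d+\operatorname{tr}(E)/A$. The operator bound $I-P\le\d^{-1}(I-L_Q)$ turns the leakage into $\operatorname{tr}E=\sum_{\L\cap Q}\langle(I-P)\phi_{\l,\mu},\phi_{\l,\mu}\rangle\le\d^{-1}\sum_{\L\cap Q}\int_{(Q-(\l,\mu))^c}\psi$, and the same boundary estimate (now with uniform discreteness counting the points near $\partial Q$) shows this is $o(|Q|)$. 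Thus $n\le(1-\d)^{-1}|Q|+o(|Q|)$ uniformly in the center; letting $R\to\infty$ and then $\d\to0$ gives $D^+(\L)\le1$.

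For part~2 I would dualize on $V=\operatorname{range}P$, with $\dim V=d\ge|Q|(1-o(1))$. The difficulty here is genuine: the frame lower bound controls the sum over all of $\L$, not over $\L\cap Q$, and a naive trace computation leaks the frame constant $A$ into the density. The remedy is to control the tail operator $S_{Q^c}=\sum_{\L\setminus Q}\langle\,\cdot\,,\phi_{\l,\mu}\rangle\phi_{\l,\mu}$ on $V$: using $P\le(1-\d)^{-1}L_Q$ one gets $\operatorname{tr}(PS_{Q^c}P)\le(1-\d)^{-1}\sum_{\L\setminus Q}\int_{Q-(\l,\mu)}\psi=o(|Q|)$ by the boundary estimate. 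Let $W\subseteq V$ be the span of the eigenvectors of $PS_{Q^c}P$ with eigenvalue $\ge A/2$; then $\dim W\le(2/A)\operatorname{tr}(PS_{Q^c}P)=o(|Q|)$, while on $V\ominus W$ the frame lower bound forces $\sum_{\L\cap Q}|\langle f,\phi_{\l,\mu}\rangle|^2=\langle(S-S_{Q^c})f,f\rangle\ge(A/2)\norm{f}^2$. Hence the compressed analysis map $f\mapsto(\langle f,\phi_{\l,\mu}\rangle)_{\L\cap Q}$ is injective on $V\ominus W$, so $n\ge\dim(V\ominus W)=d-o(|Q|)\ge|Q|(1-o(1))$ uniformly in the center, giving $D^-(\L)\ge1$.

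The main obstacle is the single spectral estimate $\operatorname{tr}(L_Q-L_Q^2)=o(|Q|)$, together with the companion boundary sums: the entire argument hinges on the plunge region of $L_Q$ being negligible for an \emph{arbitrary} $L^2$ window, and this is precisely where the integrability of $\psi=|V_gg|^2$ (and nothing more) enters, explaining why the theorem holds for every $g\in L^2$. The only secondary point requiring care is arranging the frame case so that no frame constant survives into the density, which the spectral truncation $W$ is designed to absorb.
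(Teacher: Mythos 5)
Your argument is correct, but it takes a genuinely different route from the paper's. The paper deliberately avoids all spectral theory: it fixes a Gaussian window $\phi$ (independent of $g$), sets $W=\mathrm{span}\{g_{\l\mu}:(\l,\mu)\in\L\cap Q\}$ (Riesz case) or the span of the dual-frame elements (frame case), and combines the identity $\int_{\R^2}\norm{P_W\phi_{xy}}^2\,dxdy=\dim W$ with the trivial bound $\norm{P_W\phi_{xy}}^2\le 1$ on a slightly enlarged cube; sums over $\L$ are converted into integrals by a pointwise sub-mean-value estimate for $V_\phi g$ coming from the Fock-space (entire function) structure, which is exactly where the Gaussian enters. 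You instead take the window to be $g$ itself and run a spectral argument on the localization operator $L_Q$, in the lineage of Landau's and Ramanathan--Steger's original trace-estimate proofs: the plunge-region bound $\operatorname{tr}(L_Q-L_Q^2)=o(|Q|)$, the spectral projection $P=\mathbf{1}_{[1-\d,1]}(L_Q)$, the Gram-matrix splitting $G=\tilde G+E$ with $\operatorname{rank}\tilde G\le\operatorname{rank}P$ for Riesz sequences, and the truncation $W$ absorbing the frame constant for frames; your identities all check out ($\int|V_gg|^2=\norm{g}^4$, $\operatorname{tr}L_Q=|Q|$, $\operatorname{tr}L_Q^2=\int\psi(u)\,|Q\cap(Q-u)|\,du$), as does the rank/trace bookkeeping in both parts. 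One compressed step deserves care: for the boundary sums $\sum_{\L\cap Q}\int_{(Q-(\l,\mu))^c}\psi$ and $\sum_{\L\setminus Q}\int_{Q-(\l,\mu)}\psi$, a naive decomposition into annuli indexed by distance to $\partial Q$ would require a moment of $\psi$, which a general $g\in L^2$ does not have; the correct route (consistent with what you indicate) is to exchange sum and integral, bound the resulting counts $|\L\cap(Q\setminus(Q-u))|$, resp.\ $|\L\cap((Q-u)\setminus Q)|$, by $C(\d_0)\left(\min\{(2R)^2,2R|u|_1\}+R\right)$ using uniform discreteness with separation $\d_0$, and then split at $|u|_1=\sqrt R$ exactly as in your trace estimate. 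As for what each approach buys: the paper's projection identity yields the sharp main terms $(2R\pm1)^2$ with an explicit remainder $I_G(R)$ (hence its Theorem 1 under moment conditions) and adapts directly to uniformly minimal systems via bounded biorthogonal duals, whereas your spectral method carries the factor $(1-\d)^{-1}$ and requires $\d\to0$, so it recovers the density statement but not the refined counting estimates; on the other hand, your proof needs no Gaussian window and no Fock-space pointwise estimate, relying only on Moyal's identity and the integrability of $\psi=|V_gg|^2$.
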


To obtain this result, Ramanathan and Steger applied a technique which involved trace estimates of compositions of certain projection operators. The density of Gabor systems has since become a significant area of research, see \cite{C1} and references therein for an extensive review of the area.

Prior to this Landau studied a similar question regarding sampling and interpolating sequences for functions with a given bounded spectrum \cite{La}. Landau's technique required a much more delicate estimate of the eigenvalues of compositions of projection operators. This technique, however, could be applied in a more general setting. Landau's technique, as well as Ramanathan and Steger's, has been applied in different settings, e.g in the study of sampling and interpolating sequences in analytic function spaces and in that of Fekete points on complex manifolds, (\cite{MO}, \cite{MMO}, among many others).

Recently, A.~Olevskii and the third author developed a new approach to Landau's theorems \cite{NO}. This approach provided a significantly simpler proof of Landau's results as well as several extensions of them. In particular, this technique does not require any study of operators or estimates of eigenvalues. The technique developed in \cite{NO} can be applied to various other settings, see e.g. \cite{LO}, \cite{G1}, \cite{G2}. In particular, it refines and extends results previously obtained by Landau's or Ramanathan and Steger's approach

One of the main goals in this paper is to develop and simplify further the approach introduced in \cite{NO} (see e.g., Lemma \ref{point} and its applications in this paper). We apply this approach to the study of Gabor systems, with the short time Fourier transform taking the role that the Fourier transform played in \cite{NO}. With this we obtain a simple proof of Theorem A. Moreover, this technique allows us to extend Theorem A in several new directions.

First, we find a more precise estimate of the distribution of $\L$ in cases where the generator $g$ satisfies some additional conditions. For example, if $g$ has some time frequency--concentration we obtain the following theorem as a particular case of a more general result.
\begin{theorem}\label{withest}
Fix $\alpha >0$. Let $\L\subset\R^2$ be a uniformly discrete sequence, with separation constant $\delta>0$, and let $g\in L^2(\R)$ be such that
\[
\int_{\R}|t|^{\alpha}|g(t)|^2dt<\infty\quad\textrm{and}\quad\int_{\R}|w|^{\alpha}|\hat{g}(w)|^2dw<\infty,
\]
where $\hat{g}$ denotes the Fourier transform of $g$.
\begin{itemize}
\item[1.] If $G(g,\L)$ is a Riesz sequence then
\[
\sup_{(a,b)\in\R^2}|\L\cap Q_{(a,b)}(R)|\leq  (2R+1)^2+C\rho_{\alpha}(R).
\]
\item[2.] If $G(g,\L)$ is a frame then
\[
\inf_{(a,b)\in\R^2}|\L\cap Q_{(a,b)}(R)|\geq  (2R-1)^2-C\rho_{\alpha}(R).
\]
\end{itemize}
Where $\rho_{\alpha}(R)$ is defined by
\[
\rho_{\alpha}(R)=\left\{
	\begin{array}{lll}
	R^{2-\alpha} &  0<\alpha< 2 \\
		\log R &  \alpha=2\\
1 &  \alpha>2
	\end{array}
\right.
\]
and $C$ denotes a positive constant which may depend on g, $\alpha$, $\delta$ and the Riesz sequence or frame bounds.
\end{theorem}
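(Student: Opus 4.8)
The plan is to recast everything through the short time Fourier transform and then run the comparison scheme behind Theorem A, keeping careful track of the error terms that the decay of $g$ and $\hat g$ produce. Write $\pi(\l,\mu)g(t)=e^{2\pi i\mu t}g(t-\l)$, so that $G(g,\L)=\{\pi(\l,\mu)g\}_{(\l,\mu)\in\L}$, and recall that the pairwise inner products are governed by the (radar) ambiguity function
\[
\langle \pi(\l,\mu)g,\pi(\l',\mu')g\rangle = e^{2\pi i\theta}\,A_g(\l-\l',\mu-\mu'),\qquad A_g(x,w):=\int_\R g(t)\overline{g(t-x)}e^{-2\pi i wt}\,dt .
\]
Thus $\pi(\l,\mu)g$ is, in the time--frequency plane, a bump centered at $(\l,\mu)$, and two such bumps interact only as much as $A_g$ allows. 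The reference object at density one is the Gabor orthonormal basis $G(\1_{[0,1]},\Z^2)$, whose elements are indexed by the integer lattice; the appearance of $(2R\pm1)^2$ in the statement is exactly the count $\#(\Z^2\cap Q_{(a,b)}(R\pm1))$ of lattice points in the cube. I would prove both parts by comparing $G(g,\L)$ against this lattice through the point lemma (Lemma~\ref{point}): in the Riesz case one uses that the vectors $\{\pi(\l,\mu)g\}_{(\l,\mu)\in\L\cap Q}$ are almost orthonormal with lower bound $A$ and, projected onto the span of the finitely many reference basis elements sitting over $Q$, cannot outnumber them by more than $A^{-1}$ times the total energy that escapes the cube; the frame case is the dual statement, testing the lower frame inequality against the reference family supported over the cube.

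First I would carry out the reduction to a single scalar quantity, the \emph{escaping energy}
\[
\eta_R:=\sum_{(\l,\mu)\in\L\cap Q}\ \big(\text{energy of } \pi(\l,\mu)g \text{ lying outside a slightly enlarged cube}\big),
\]
and show that each of the two inequalities of the theorem follows from a bound of the form $\eta_R\le C\rho_\a(R)$, together with the observation that enlarging or shrinking the cube by $O(1)$ changes the lattice count only by the admissible passage from $(2R+1)^2$ to $(2R-1)^2$. This step is where the hypotheses on $g$ and $\hat g$ enter, and it splits cleanly into a time direction and a frequency direction: the estimate is symmetric under the Fourier transform, which interchanges $g\leftrightarrow\hat g$ and the two moment conditions, so the frequency direction is handled by applying the time estimate to $\hat g$.

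The analytic heart is the decay of the ambiguity function coming from the moments. Splitting the defining integral and using Cauchy--Schwarz with $\int_\R|t|^\a|g(t)|^2\,dt<\infty$ gives, uniformly in $w$,
\[
|A_g(x,w)|^2\le C(1+|x|)^{-\a},
\]
and the Fourier-dual computation gives $|A_g(x,w)|^2\le C(1+|w|)^{-\a}$ uniformly in $x$. Integrating out one variable via the Parseval identities $\int_\R|A_g(x,w)|^2\,dw=(|g|^2\star|g|^2)(x)$ and $\int_\R|A_g(x,w)|^2\,dx=(|\hat g|^2\star|\hat g|^2)(w)$ reduces $\eta_R$ to one-dimensional tail integrals of $|g|^2\star|g|^2$, whose $\a$-moment is finite because $|x|^\a\le 2^\a(|t|^\a+|t-x|^\a)$. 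Tracking these tails against the cutoff at scale $R$ produces the radial integral $\int_1^R r^{1-\a}\,dr$, which is precisely $\rho_\a(R)$: convergent (size $O(1)$) for $\a>2$, logarithmic for $\a=2$, and of size $R^{2-\a}$ for $\a<2$. The constant $C$ absorbs $\|g\|$, the two moments, $\d$ (through the uniform discreteness, which bounds the number of points in any unit cell), and the Riesz or frame bounds.

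The step I expect to be the main obstacle is obtaining the \emph{sharp} rate $\rho_\a(R)$ rather than the naive one. A crude, nonnegative accounting of the escaping energy sums a per-point tail of order $r^{-\a}$ over a boundary layer whose length is of order $R$, and therefore never beats the perimeter bound $O(R)$; yet for $\a>1$ the claimed error $R^{2-\a}$ is genuinely smaller than $R$. Recovering it forces one to exploit cancellation at the boundary --- in effect a discrepancy estimate between the lattice sum and the area it approximates, in which the smoothness of the convolution kernel $|A_g|^2$ (encoded by the moments) suppresses the boundary contribution below perimeter order. Making this cancellation rigorous inside the comparison scheme, so that the single quantity $\eta_R$ is controlled by $\rho_\a(R)$ uniformly in the center $(a,b)$, is the delicate part, and is exactly where the refinement of the approach of \cite{NO} packaged in Lemma~\ref{point} does its work.
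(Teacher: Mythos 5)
Your reduction is essentially the paper's own scheme: the paper first proves (Theorem \ref{main}) that a Riesz sequence forces $|\Lambda\cap Q_{(a,b)}(R)|\leq (2R+1)^2+C\,I_G\left(R+\frac14\right)$, with a matching lower bound in the frame case, where $G=V_{\phi}g$ and
\[
I_G(R)=\int_{Q_0(R)}\int_{Q_0^c(R+\frac14)}|G(t-x,s-y)|^2\,dt\,ds\,dx\,dy
\]
is precisely your ``escaping energy'' in continuous form (the sum over $\Lambda\cap Q$ becomes this integral via the Fock-space pointwise estimate, Lemma \ref{ballestimate}); the moment hypotheses enter through Lemma \ref{inalpha}, which plays the role of your ambiguity-function decay, and $(2R\pm 1)^2$ arises as the area of $Q_{(a,b)}(R\pm\frac12)$, not as a lattice count. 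Up to that point your outline is sound. The genuine gap is that the decisive estimate, $\eta_R\leq C\rho_{\alpha}(R)$, is never proved: you name it as the main obstacle and defer it to an unspecified ``cancellation at the boundary''. Since everything before that step already yields Theorem A, the entire content of Theorem \ref{withest} lies in the step you left open.

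Moreover, the cancellation you hope for does not exist: the escaping energy is a sum/integral of nonnegative terms and carries an irreducible perimeter-size contribution. Indeed, for any $g\neq 0$ consider pairs with $x\in[R-\frac12,R]$, $|y|\leq R$, $t-x\in[1,2]$, $|s-y|\leq 1$; then $(x,y)\in Q_0(R)$, $t\geq R+\frac12$ so $(t,s)\in Q_0^c(R+\frac14)$, and
\[
I_G(R)\;\geq\;2R\cdot\tfrac12\int_1^2\int_{-1}^1|G(u,v)|^2\,dv\,du\;=\;c_g\,R,
\]
with $c_g>0$ because $V_{\phi}g$ cannot vanish on an open set unless $g=0$ (Bargmann transform). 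Hence $I_G(R)\gtrsim R$ for every nonzero $g$, and no estimate of this quantity, however clever, can produce the rate $\rho_{\alpha}(R)=o(R)$ when $\alpha>1$; the best correct bound within this scheme is of order $\max\left(R,R^{2-\alpha}\right)$ (with a logarithm at $\alpha=1$), so the scheme proves the theorem only for $0<\alpha<1$. Your instinct about the perimeter barrier is thus exactly right, and it in fact exposes a flaw in the paper's own argument: Lemma \ref{err2}, which the paper uses to pass from $I_G$ to $\rho_{\alpha}$, rests on the pointwise domination $|t-x|^{\alpha}+|s-y|^{\alpha}\geq (R+\frac14-|x|)^{\alpha}+(R+\frac14-|y|)^{\alpha}$ on the domain of integration, and this fails badly --- take $x=R$, $y=0$, $(t,s)=(R+\frac12,0)$, where the left side is $2^{-\alpha}$ while the right side is about $R^{\alpha}$. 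So for $\alpha\geq 1$ neither your proposal nor the paper establishes the stated error term; proving (or refuting) it requires an idea that measures the error by something other than this nonnegative escaping energy.
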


The quantity $\rho_{\alpha}(R)$ in Theorem \ref{withest} determines how much can the distribution of $\L$ deviate from the distribution of the integer lattice.
To see this, note that if $\L=\Z^2$ then
\[
(2R-1)^2<|\L\cap Q_{(a,b)}(R)|\leq (2R+1)^2,
\]
where the inequality on the left hand side is obtained when $R$ is an integer and the bound on the right hand side is approximated when $R$ is smaller then an integer, but arbitrarily close to it.

Next, we show that the Riesz sequence condition in Theorem A1 may be relaxed, so that the theorem holds for a larger class of 'under-complete' systems. (For the precise definition of uniformly minimal sequences see subsection 2.4).
\begin{theorem}\label{um thm}
If $G(g,\L)$ is a uniformly minimal sequence then (\ref{RS-RS}) holds. In general, the conclusion does not hold if the system merely is minimal.
\end{theorem}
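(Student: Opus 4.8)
\emph{Plan of proof.} I would treat the two assertions separately: the density bound is obtained by re-running the argument already established for Theorem~A1, and the failure in the minimal case is exhibited by an explicit clumped construction.

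\emph{The positive direction.} Write $g_\lambda:=e^{2\pi i\mu t}g(t-\l)$ for $\lambda=(\l,\mu)$, so that every atom of $G(g,\L)$ has the same norm $\|g_\lambda\|=\|g\|$. By the characterization of uniform minimality, $G(g,\L)$ being uniformly minimal is equivalent to the existence of a system $\{\gamma_\lambda\}$ biorthogonal to $\{g_\lambda\}$ with $\sup_\lambda\|\gamma_\lambda\|\le M<\infty$ (since $\|g_\lambda\|$ is constant). My central observation is that this bounded dual is the \emph{only} input that the proof of Theorem~A1 uses: the Riesz upper (Bessel) bound never enters the derivation of \eqref{RS-RS}. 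Concretely, I would revisit the proof of Theorem~A1 and verify that each appeal to the Riesz hypothesis is in fact an appeal to the estimate $\sup_\lambda\|\gamma_\lambda\|\le M$ fed into Lemma~\ref{point}, replacing any use of the Bessel property by the trivial bound coming from the constancy of $\|g_\lambda\|$. Since a Riesz sequence is a fortiori uniformly minimal, this yields a strictly stronger statement with essentially the same proof, and the important point is that $\gamma_\lambda$ need not be Bessel, nor even have any Gabor structure.

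\emph{Why this is the right hypothesis.} Under the short time Fourier transform $V_g$, each atom $g_\lambda$ has phase-space profile $|V_g g_\lambda|=|V_g g(\cdot-\lambda)|$ concentrated near $\lambda$, while the number of almost-orthogonal directions available to functions concentrated on a cube of area $(2R)^2$ is itself $\approx(2R)^2$. A uniformly bounded biorthogonal system is precisely what turns ``too many almost-localized atoms in $Q_{(a,b)}(R)$'' into a contradiction through the counting supplied by Lemma~\ref{point}: the uniform bound on $\|\gamma_\lambda\|$ controls the coefficient functionals $\langle\,\cdot\,,\gamma_\lambda\rangle$ simultaneously, which is all the localization-versus-dimension count requires. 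The Bessel bound, by contrast, is relevant only to the lower-density statement and plays no role here.

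\emph{The minimal case fails.} For the second assertion I would build a uniformly discrete $\L$, with separation constant $\delta<1$, consisting of widely separated square clumps $C_k$, where $C_k$ is a cube of side $2R_k$ with $R_k\to\infty$ containing about $c\,(2R_k)^2$ points with $1<c<\delta^{-2}$ (possible since $\delta<1$ permits up to $\delta^{-2}(2R_k)^2$ separated points in such a cube). Then $D^+(\L)\ge c>1$, so \eqref{RS-RS} fails. Taking $g$ to be a Gaussian, every finite family of distinct atoms is linearly independent (the classical Gaussian case of linear independence of time-frequency translates), so each block has an invertible Gram matrix; its smallest eigenvalue $a_k$ must nonetheless tend to $0$, since $a_k$ bounded below would make block $k$ a Riesz sequence of density $\ge c>1$, contradicting the first part. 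This is exactly why the example is minimal but \emph{not} uniformly minimal. To upgrade block-wise independence to global minimality of $G(g,\L)$, I would space the clumps so rapidly that, using the Gaussian decay of $V_g g$, the total off-block coupling in the Gram matrix is dominated by the least eigenvalue $a_k$ of each diagonal block; a diagonal-dominance / Neumann-series argument then produces global biorthogonal functionals $\gamma_\lambda$ (with $\|\gamma_\lambda\|\asymp a_k^{-1/2}\to\infty$), so each $g_\lambda$ stays off the closed span of the others.

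The main obstacle I anticipate is this last quantitative step. Confirming that the positive direction needs only the bounded dual is bookkeeping, and the clump counting is immediate; the delicate point is controlling the inter-clump interactions, since the diagonal blocks have condition numbers blowing up with $k$ and one must separate the clumps fast enough, depending on both $R_k$ and the tail of $V_g g$, to keep the perturbation below the vanishing lower bounds $a_k$ while never making the whole Gram operator boundedly invertible, which would illegitimately turn the example into a Riesz sequence.
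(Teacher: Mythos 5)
Your plan for the positive direction has a genuine gap. You assert that the proof of Theorem~A1 uses only a bounded biorthogonal system, and that ``the Riesz upper (Bessel) bound never enters.'' But the bound that actually drives that proof is the \emph{lower} Riesz bound $A$: Step~I of the proof of Theorem~\ref{main}, part~1, is the frame inequality
\[
\|P\phi_{xy}\|^2\le \frac{1}{A}\sum_{\L\cap Q_{(a,b)}(R)}|\langle \phi_{xy},g_{\l\mu}\rangle|^2
=\frac{1}{A}\sum_{\L\cap Q_{(a,b)}(R)}|G(x-\l,y-\mu)|^2,
\]
applied to the restricted system in $W$, with $A$ uniform in $R$ and $(a,b)$. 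Uniform minimality supplies no such uniform lower frame bound: a biorthogonal system with $\sup_\lambda\|\gamma_\lambda\|\le M$ does not control the smallest eigenvalue of the Gram matrices of the finite sections, which may degenerate as the cube grows. So the ``verification'' you propose fails at the very first estimate, and feeding $M$ into Lemma~\ref{point} gives you no way to bound the tail integral $\int_{Q^c_{(a,b)}(R+\frac12)}\|P\phi_{xy}\|^2\,dxdy$. The missing idea is the pairing identity of Lemma~\ref{anotherpoint}, which you never invoke: with $h^{\l\mu}$ the bounded biorthogonal system and $H_{\l\mu}=V_\phi(Ph^{\l\mu})$, one has $\|P\phi_{xy}\|^2=\sum G_{\l\mu}\overline{H_{\l\mu}}\le 1$ pointwise; integrating this over $Q_{(a,b)}(R+b)$ gives the area bound, while biorthogonality and unitarity of the STFT give $\int_{\R^2}G_{\l\mu}\overline{H_{\l\mu}}=\langle g_{\l\mu},h^{\l\mu}\rangle=1$, so the count $|\L\cap Q_{(a,b)}(R)|$ enters through the dual pairing rather than through a frame inequality; the tails are then killed by Cauchy--Schwarz, using $\|H_{\l\mu}\|\le B$ together with a choice of $b$ making $\int_{Q_0^c(b)}|V_\phi g|^2<\epsilon^2$. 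This is where the bounded dual is genuinely used, and it is a different mechanism from the one you describe.

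The counterexample half also stops short of a proof at exactly its hardest point. Because the block Gram matrices have smallest eigenvalues $a_k\to 0$, a near-minimizing linear combination drawn from other clumps can carry coefficients of size comparable to $a_j^{-1}$, which amplify the ``exponentially small'' inter-clump couplings; your Neumann-series bound must therefore beat $a_j^{-1}$ for \emph{all} blocks simultaneously (with clump shapes, hence all $a_k$, fixed before placement), and you additionally need a global Bessel bound for the uniformly discrete Gaussian system to turn an $\ell^2$ solution of the Gram equations into actual biorthogonal vectors. None of this is carried out, and it is a substantial quantitative argument. The paper avoids the entire difficulty: it quotes Olevskii--Ulanovskii (reference [OU]) for a set $\L\subset\R$ with $\{e^{2\pi i\l t}\}_{\l\in\L}$ minimal in $L^2(-a,a)$ and $D^+(\L)>1$, then takes $g=\1_{[-a,a]}$ and $K=(2a\Z)\times\L$, so that $G(g,K)$ is minimal while $D^+(K)\ge 1/(2a)$ is arbitrarily large. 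So in both halves the proposal substitutes an unproved claim (respectively, a hard unfinished construction) for the key step.
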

In particular, when combined with the results from \cite{C2}, this implies that if $g\in L^1(\R)\cap L^2(\R)$ and $G(g,\L)$ is a Schauder basis then
$D(\L)=D^+(\L)=D^-(\L)=1$.

Further, we have the following theorem.
\begin{theorem}\label{lattice-thm}
Let $\L\subset \R^2$ be a lattice. Note that in this case $D(\L):=D^-(\L)=D^+(\L)=1/\textrm{det}(\L)$. We have,
\begin{itemize}
\item[1.] If $G(g,\L)$ is minimal then it is uniformly minimal and $D(\L)\leq 1$.
\item[2.] If $G(g,\L)$ is complete then $D(\L)\geq 1$.
\end{itemize}
\end{theorem}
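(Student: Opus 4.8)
The plan is to prove the two assertions in turn, obtaining the first directly from Theorem \ref{um thm} by exploiting the group structure of $\L$, and reducing the second to the first by means of Gabor duality.

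For the first assertion, write $\pi(\l)$, $\l\in\L\subset\R^2$, for the time--frequency shift $\pi(\l)f(t)=e^{2\pi i\mu t}f(t-s)$ where $\l=(s,\mu)$, so that $G(g,\L)=\{\pi(\l)g\}_{\l\in\L}$, each $\pi(\l)$ is unitary (whence $\|\pi(\l)g\|=\|g\|$ for all $\l$), and $\pi(\l)\pi(\l')=c(\l,\l')\pi(\l+\l')$ for some unimodular factor $c(\l,\l')$. Minimality of $G(g,\L)$ gives in particular $g\notin V:=\overline{\operatorname{span}}\{\pi(\l)g:\l\in\L\setminus\{0\}\}$; letting $P$ be the orthogonal projection onto $V^{\perp}$ and setting $h=Pg/\|Pg\|^{2}$, one checks that $\langle g,h\rangle=1$ while $\langle\pi(\l)g,h\rangle=0$ for every $\l\in\L\setminus\{0\}$.

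I would then promote $h$ to a biorthogonal system of Gabor type. Since $\L$ is a lattice, $\l-\l'\in\L$ for all $\l,\l'\in\L$, so by unitarity together with the cocycle relation one has $\langle\pi(\l)g,\pi(\l')h\rangle=c\,\langle\pi(\l-\l')g,h\rangle=\delta_{\l,\l'}$, the unimodular factor multiplying a vanishing inner product off the diagonal and the diagonal equalling $\langle g,h\rangle=1$. Thus $\{\pi(\l)h\}_{\l\in\L}$ is biorthogonal to $G(g,\L)$ with $\sup_{\l}\|\pi(\l)g\|\,\|\pi(\l)h\|=\|g\|\,\|h\|<\infty$. Since a minimal system admitting a biorthogonal system of uniformly bounded norm--products is uniformly minimal (the distance of $\pi(\l)g$ to the closed span of the remaining vectors is at least $1/\|\pi(\l)h\|$), the system $G(g,\L)$ is uniformly minimal, and Theorem \ref{um thm} yields $D^{+}(\L)\le1$, i.e. $D(\L)\le1$. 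It is precisely closure of $\L$ under subtraction that is used here, which is consistent with the failure of the implication for merely minimal, non--lattice systems recorded in Theorem \ref{um thm}.

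For the second assertion I would pass to the adjoint lattice $\L^{\circ}$, for which $\det\L^{\circ}=1/\det\L$, and invoke the Gabor duality principle of Ron--Shen and Janssen: completeness of $G(g,\L)$ is equivalent to minimality of $G(g,\L^{\circ})$. Granting the direction I need --- that completeness over $\L$ forces minimality over $\L^{\circ}$ --- the first assertion applied to $\L^{\circ}$ gives $D(\L^{\circ})\le1$, that is $\det\L^{\circ}\ge1$, hence $\det\L\le1$ and $D(\L)=1/\det\L\ge1$. The hard part is exactly this duality step, where the Gabor structure genuinely enters: I would establish it through the Fundamental Identity of Gabor Analysis (Poisson summation for the short--time Fourier transform) together with the Wexler--Raz biorthogonality relations, which translate injectivity of the analysis operator over $\L$ (completeness) into the existence of a window $h$ with $\langle\pi(\mu)g,h\rangle=\delta_{\mu,0}$ on $\L^{\circ}$ (minimality over $\L^{\circ}$). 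Once the duality is in place, the density bookkeeping above is immediate.
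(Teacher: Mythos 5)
Your proof of part 1 is correct and is essentially the paper's own argument: minimality gives a single function $h$ biorthogonal to $g$ relative to the system, the lattice's closure under subtraction together with the unimodular commutation factor shows that the time--frequency shifts of $h$ along $\L$ form a biorthogonal system, this system is bounded in norm, so $G(g,\L)$ is uniformly minimal (Lemma \ref{um-dual}) and Theorem \ref{um thm} gives $D(\L)\le 1$. No issues there.

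Part 2, however, rests on a step that is not a theorem and is in fact false. You claim that completeness of $G(g,\L)$ is equivalent to (or at least implies) minimality of $G(g,\L^{\circ})$. The genuine duality principle of Ron--Shen, Janssen and Daubechies--Landau--Landau says that $G(g,\L)$ is a \emph{frame} if and only if $G(g,\L^{\circ})$ is a \emph{Riesz sequence}; both notions carry quantitative bounds, and the equivalence does not survive weakening to ``complete'' versus ``minimal''. A concrete counterexample to the implication you need: let $g(t)=e^{-\pi t^{2}}$ and $\L=\Z^{2}$, which is self-adjoint, $\L^{\circ}=\Z^{2}$. The Gaussian system $G(g,\Z^{2})$ is complete (von Neumann's classical claim), and by the results of Perelomov and of Bargmann, Butera, Girardello and Klauder it remains complete after the removal of any single element; hence each $g_{\l\mu}$ lies in the closed span of the remaining vectors, so $G(g,\Z^{2})=G(g,\L^{\circ})$ is \emph{not} minimal. (Equivalently, via the Zak transform: $Zg$ has a simple zero, so $1/Zg\notin L^{2}$, which is exactly the failure of minimality at critical density.) Thus ``complete over $\L$ $\Rightarrow$ minimal over $\L^{\circ}$'' fails and the reduction of part 2 to part 1 collapses. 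A secondary problem with the same step is that the Wexler--Raz relations and the Fundamental Identity of Gabor Analysis you propose to use require Bessel-type hypotheses on the window, which a general $g\in L^{2}(\R)$ need not satisfy.

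For contrast, the paper proves part 2 directly, with no duality. Completeness plus compactness of a fundamental domain of $\L$ gives, for each $\epsilon>0$, a radius $d$ such that every $\phi_{xy}$ is within $\sqrt{\epsilon}$ of $\mathrm{span}\{g_{\l\mu}:(\l,\mu)\in\L\cap Q_{(x,y)}(d)\}$; the lattice invariance is what transports the approximation from the fundamental domain to all of $\R^{2}$ (the homogeneous approximation property). Then, with $W$ the span of the elements indexed by $\L\cap Q_{(a,b)}(R)$ and $P=P_{W}$, one has $\|P\phi_{xy}\|^{2}\ge 1-\epsilon$ on $Q_{(a,b)}(R-d)$; integrating this over $Q_{(a,b)}(R-d)$ and applying Lemma \ref{point} yields $(1-\epsilon)(2R-2d)^{2}\le \mathrm{dim}\,W\le|\L\cap Q_{(a,b)}(R)|$, and letting $R\to\infty$ and then $\epsilon\to 0$ gives $D(\L)\ge 1$. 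Any repair of your outline would need an argument of this kind in place of the duality step.
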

We note that part 2 of Theorem \ref{lattice-thm} was obtained in \cite{RS}. We give a simple proof of this part as well.

Finally, Theorem A is known to hold when the single generator $g$ is replaced by a finite amount of generators, \cite{C3} (see also \cite{GRO}). We find that this is true for all the results formulated above.

\textbf{Remark 1.} All the results in this paper hold also in higher dimensions, with similar proofs (and an appropriate choice of exponents in the definition of $\rho_{\alpha}(R)$ in Theorem 1). We formulate and prove the results in dimension one for ease of notation alone.

The paper is organized as follows. In Section 2, we provide some background regarding systems of vectors in separable Hilbert spaces and the short time Fourier transform. In Section 3 we prove the key lemmas which will be used throughout the paper, as well as some auxiliary lemmas. In Section 4 we prove a more general version of Theorem A and obtain both it and Theorem \ref{withest} as corollaries. Theorem \ref{um thm} is proved in Section 5 and Theorem \ref{lattice-thm} in Section 6. Finally, in Section 7 we discuss the case of finitely many generators.

\section{Preliminaries}
\subsection{Notations}
For $(a, b)\in \R^2$, let $\mathcal{T}_a$ and $\mathcal{M}_b$ denote translation and modulation operators on $L^2(\R)$, that is
\[
(\mathcal{T}_ag)(t)= g(t- a) \quad\textrm{and}\quad (\mathcal{M}_bg)(t) = e^{2\pi ib t}g(t).
\]
Given a function $g\in L^2(\R)$, we denote $g_{ab}:=\mathcal{M}_b\mathcal{T}_ag$. So, for the Gabor system generated by $g\in L^2(\R)$ and the index set $\L\subset\R^2$ we have
\[
G(g,\L)=\{e^{2\pi i\mu t}g(t - \l)\}_{(\l,\mu)\in\L}=\{g_{\l\mu}\}_{(\l,\mu)\in\L}.
\]

For a function $g\in L^1(\R)$ let $\mathcal{F}g=\hat{g}$ denote the Fourier transform of $g$,
\[
\mathcal{F}g(w)=\hat{g}(w):=\int_{\R}g(t)e^{-2\pi i wt}dt,
\]
with the usual extension of $\mathcal{F}$ to a unitary operator on $L^2(\R)$.

As mentioned in the introduction, $Q_{(a,b)}(R)$ denotes the cube in $\R^2$ with center at $(a,b)$ and side length $2R$,
  \[
  Q_{(a,b)}(R)=\{(x,y)\in R^2: |x-a|,|y-b|\leq R\},
  \]
its complement $\R^2\setminus  Q_{(a,b)}(R)$ is denoted by $Q^c_{(a,b)}(R)$.

If $S\subset \R^2$ is a measurable set then $|S|$ denotes the Lebesgue measure of $S$. Given a discrete
set $\L\subset\R^2$, we write $|\L|$ for the number of points in $|\L|$. It will be clear from the context which option is being used.

Given a Hilbert space $H$ and a subset $E \subset H$, we let span$(E)$ denote the span of $E$, that is the
subspace of all finite linear combinations of elements from $E$.

\subsection{Density}

Let $\L\subset \R^2$. We say that $\L$ is uniformly discrete with separation constant $\delta > 0$ if
$\textrm{dist}(x,y)> \delta$ for all distinct $x,y \in \L$.
If $\L$ is uniformly discrete then the following limits exist \cite{Beu},
\[
D^+(\L):=\lim_{R\rightarrow\infty}\frac{\max_{(a,b)\in\R^2}|\L\cap Q_{(a,b)}(R)|}{(2R)^2};
\]
and
\[
D^-(\L):=\lim_{R\rightarrow\infty}\frac{\min_{(a,b)\in\R^2}|\L\cap Q_{(a,b)}(R)|}{(2R)^2}.
\]
$D^-(\L)$ and $D^+(\L)$ are called the lower and upper (Beurling) densities of $\L$, respectively. If $D^-(\L)=D^+(\L)$ we say that $\L$ has uniform density and denote it by $D(\L)$. These definitions go back to Beurling and Kahane. The choice of the cube in these definitions is arbitrary and is made for convenience; the values of these densities remain the same if the cube is replaced by any other convex set in $\R^2$. For a precise formulation and proof of this claim, see \cite{La}.

\subsection{The short time Fourier transform}

Let $\phi\in L^2(\R)$. The short-time Fourier transform (STFT) with window function $\phi$ is the operator acting on $L^2(\R)$ by
\[
{V_{\phi}g(x,y)} := \langle g, \phi_{xy} \rangle = \int_{\R}g(t)\overline{\phi(t-x)}e^{-2\pi i yt}dt.
\]
This transform is an important tool in the study of Gabor systems and time frequency analysis.
It is well known that if $\|\phi\|_{L^2(\R)}=1$ then $V_{\phi}$ is a unitary operator from $L^2(\R)$ into $L^2(\R^2)$. Moreover, we have
\begin{lemma}\label{basic}
Let $\phi,g\in  L^2(\R)$. Then:
\begin{itemize}
\item[1.] $V_{\phi}\mathcal{T}_{\lambda}g(x, y) = e^{-2\pi i \l y} V_{\phi}g(x-\l, y)$;
\item[2.] $V_{\phi}\mathcal{M}_{\mu}g(x, y)= V_{\phi}g(x, y-\mu)$.
\end{itemize}
In particular, the following equality holds,
\begin{equation}\label{translations}
|V_{\phi}g_{\l\mu}(x,y)|=|V_{\phi}g(x-\l,y-\mu)|.
\end{equation}
\end{lemma}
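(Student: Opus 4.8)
The plan is to verify all three identities by direct computation from the definition of the short time Fourier transform, so the only tools needed are the formula $V_{\phi}g(x,y)=\int_{\R}g(t)\overline{\phi(t-x)}e^{-2\pi i y t}\,dt$ together with a single change of variable in each integral.

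First I would prove part 1. Writing out $V_{\phi}(\mathcal{T}_{\l}g)(x,y)=\int_{\R}g(t-\l)\overline{\phi(t-x)}e^{-2\pi i y t}\,dt$ and substituting $s=t-\l$, the window argument becomes $\phi(s-(x-\l))$ and the exponential splits as $e^{-2\pi i y s}\,e^{-2\pi i y \l}$. Factoring out the constant $e^{-2\pi i \l y}$ leaves exactly $V_{\phi}g(x-\l,y)$, which is the claimed identity.

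Next I would prove part 2. Here $V_{\phi}(\mathcal{M}_{\mu}g)(x,y)=\int_{\R}e^{2\pi i \mu t}g(t)\overline{\phi(t-x)}e^{-2\pi i y t}\,dt$; combining the two exponentials into the single factor $e^{-2\pi i(y-\mu)t}$ immediately identifies the integral as $V_{\phi}g(x,y-\mu)$, with no change of variable required.

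Finally, for the combined identity \eqref{translations} I would use $g_{\l\mu}=\mathcal{M}_{\mu}\mathcal{T}_{\l}g$ and apply the two parts in turn: part 2 applied to $\mathcal{T}_{\l}g$ gives $V_{\phi}g_{\l\mu}(x,y)=V_{\phi}(\mathcal{T}_{\l}g)(x,y-\mu)$, and part 1 then yields $V_{\phi}g_{\l\mu}(x,y)=e^{-2\pi i\l(y-\mu)}V_{\phi}g(x-\l,y-\mu)$. Since the prefactor is unimodular, taking absolute values removes it and produces $|V_{\phi}g_{\l\mu}(x,y)|=|V_{\phi}g(x-\l,y-\mu)|$. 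I do not expect a serious obstacle here; the only point requiring care is the bookkeeping of the phase factors and respecting the order in which the modulation and translation operators are composed, since reversing that order would change the prefactor (though not its modulus, and hence not \eqref{translations}).
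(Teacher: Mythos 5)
Your proof is correct. The paper states this lemma without proof (it is presented as a well-known property of the STFT, with a pointer to Gr\"ochenig's book), and your argument is exactly the standard verification one would supply: a change of variables $s=t-\l$ for part 1, merging the exponentials for part 2, and composing the two parts in the order $g_{\l\mu}=\mathcal{M}_{\mu}\mathcal{T}_{\l}g$ so that the unimodular prefactor $e^{-2\pi i\l(y-\mu)}$ drops out upon taking absolute values in \eqref{translations}; your remark that reversing the order of the operators changes only the phase, not the modulus, is also accurate.
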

For more information on the STFT see e.g. \cite{GR}.

The following lemma was pointed out in \cite{RS} and is essentially an immediate consequence of the fact that the image of the Bergman-Fock transform is a space of entire functions.

\begin{lemma}\label{pointestimate}
Let $\phi(t)=e^{-|t|^2}$. For every $\delta>0$ there exists $C(\delta)$ such that for every $(\l,\mu), (x,y)\in \R^2$ and for every $G=V_{\phi}g$, $g\in L^2({\R})$, we have
\[
|G(x-\l,y-\mu)|^2\leq C(\delta)\int_{{Q_{(\l,\mu)}}(\delta)}|G(x-s,y-t)|^2dsdt.
\]
\end{lemma}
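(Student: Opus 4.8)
The plan is to exploit the well-known identification of the STFT against a Gaussian window with the Bargmann/Fock picture: this turns $V_\phi g$ into the modulus of an entire function times a fixed Gaussian weight, and subharmonicity of the modulus-squared then delivers exactly a pointwise-to-local-$L^2$ bound. The one genuine subtlety is getting a constant that is \emph{uniform in the base point}, and I would address this first, via the covariance relation \eqref{translations}.

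\textbf{Step 1 (reduction to the origin).} Writing $p_0:=(x-\lambda,y-\mu)$ and changing variables $(s,t)\mapsto(x-s,y-t)$ inside the integral, the asserted inequality is equivalent to $|G(p_0)|^2\le C(\delta)\int_{Q_{p_0}(\delta)}|G|^2$ for every $p_0\in\R^2$. By \eqref{translations}, replacing $g$ by a suitable time–frequency shift leaves the family $\{|V_\phi g|\}$ invariant up to translation of the argument and slides $p_0$ to the origin, so it suffices to prove, uniformly over $g\in L^2(\R)$, that $|V_\phi g(0,0)|^2\le C(\delta)\int_{Q_{(0,0)}(\delta)}|V_\phi g|^2$. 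I expect this reduction to be the crux of the argument: running the subharmonicity estimate directly at a general point $p_0$ produces a constant that grows with $|p_0|$ (because of the Gaussian weight appearing below), and it is precisely the covariance \eqref{translations} that eliminates this dependence.

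\textbf{Step 2 (Bargmann structure).} Completing the square in $t$ in the defining integral, and using that $\phi(t)=e^{-t^2}$ is real, I would write $V_\phi g(x,\omega)=e^{-x^2}F_g(x-\pi i\omega)$, where $F_g(\zeta):=\int_\R g(t)e^{-t^2}e^{2t\zeta}\,dt$. For $g\in L^2(\R)$ the integrand is holomorphic in $\zeta$ and, by Cauchy–Schwarz (the factor $e^{-t^2}e^{2t\operatorname{Re}\zeta}$ lies in $L^2(\R)$), the integral converges locally uniformly; hence $F_g$ is entire. Consequently $|V_\phi g(x,\omega)|^2=e^{-2x^2}|F_g(\zeta)|^2$ with $\zeta=x-\pi i\omega$.

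\textbf{Step 3 (subharmonicity and assembly).} Since $F_g$ is entire, $|F_g|^2$ is subharmonic, so the sub-mean-value inequality on the disk $\{|\zeta|\le r\}$ gives $|F_g(0)|^2\le(\pi r^2)^{-1}\int_{|\zeta|\le r}|F_g|^2\,dA$. At the origin $|V_\phi g(0,0)|^2=|F_g(0)|^2$, and on that disk $|\operatorname{Re}\zeta|=|x|\le r$ forces $|F_g(\zeta)|^2=e^{2x^2}|V_\phi g|^2\le e^{2r^2}|V_\phi g|^2$. The real-linear map $\zeta=x-\pi i\omega$ has Jacobian $\pi$ and carries $\{|\zeta|\le r\}$ onto the ellipse $\{x^2+\pi^2\omega^2\le r^2\}\subseteq Q_{(0,0)}(r)$; taking $r=\delta$ and enlarging the domain to $Q_{(0,0)}(\delta)$ (legitimate since the integrand is nonnegative) yields $|V_\phi g(0,0)|^2\le (e^{2\delta^2}/\delta^2)\int_{Q_{(0,0)}(\delta)}|V_\phi g|^2$, i.e. one may take $C(\delta)=e^{2\delta^2}/\delta^2$. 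Combined with the reduction of Step 1, this proves the lemma. The only places demanding care are the locally uniform convergence needed for entirety of $F_g$ and the bookkeeping of the Jacobian and the ellipse-versus-cube inclusion; the conceptual weight sits entirely in the covariance reduction.
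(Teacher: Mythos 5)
Your proposal is correct and takes essentially the same approach the paper intends: the paper gives no written proof of this lemma, remarking only that it is ``essentially an immediate consequence of the fact that the image of the Bergman--Fock transform is a space of entire functions,'' and your argument---writing $V_\phi g(x,\omega)=e^{-x^2}F_g(x-\pi i\omega)$ with $F_g$ entire, applying the sub-mean-value inequality to the subharmonic function $|F_g|^2$, and using the covariance relation \eqref{translations} to obtain a constant uniform in the base point---is precisely that consequence carried out in detail, with a valid constant $C(\delta)=e^{2\delta^2}/\delta^2$. The one tersely stated step (entirety of $F_g$ via locally uniform convergence of the defining integral) is a standard Morera/Fubini argument and not a genuine gap.
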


As an immediate consequence of Lemma \ref{pointestimate} we obtain the following.
\begin{lemma}\label{ballestimate}
Let $\phi(t)=e^{-|t|^2}$ and $0 < \delta $. There exists $C(\delta)$ depending only on $\delta$ such that the following holds: For every $\L\subset\R^2$ with separation constant $\delta$, every $G=V_{\phi}g$ where $g\in L^2({\R})$, and every cube $Q_{(a,b)}(R)$ we have

\begin{equation}\label{ballestimate1}
\sum_{\L\cap Q_{(a,b)}(R)}|G(x-\l,y-\mu)|^2\leq C(\delta)\int_{Q_{(a,b)}(R+\frac{1}{4})}|G(x-s,y-t)|^2dsdt,
\end{equation}
and
\begin{equation}\label{ballestimate2}
\sum_{\L\cap Q^c_{(a,b)}(R)}|G(x-\l,y-\mu)|^2\leq C(\delta)\int_{Q^c_{(a,b)}(R-\frac{1}{4})}|G(x-s,y-t)|^2dsdt.
\end{equation}
for all $(x,y)\in\R^2$.
\end{lemma}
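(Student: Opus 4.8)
The plan is to reduce both inequalities to the pointwise bound of Lemma \ref{pointestimate}, applied termwise with a sufficiently small radius, and then to exploit disjointness to collapse the resulting sum of integrals into a single integral over a union of small cubes. First I would fix a radius depending only on $\delta$, namely $\delta' := \min\{1/4,\ \delta/(2\sqrt 2)\}$, and invoke Lemma \ref{pointestimate} with $\delta'$ in the role of $\delta$. This gives, for every $(\l,\mu),(x,y)\in\R^2$ and every $G=V_\phi g$,
\[
|G(x-\l,y-\mu)|^2 \leq C(\delta')\int_{Q_{(\l,\mu)}(\delta')}|G(x-s,y-t)|^2\,ds\,dt,
\]
where $C(\delta')$ is the constant furnished by Lemma \ref{pointestimate}. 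Since $\delta'$ is a function of $\delta$ alone, so is $C(\delta):=C(\delta')$.

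Next I would sum this estimate over $(\l,\mu)\in\L\cap Q_{(a,b)}(R)$. The two constraints defining $\delta'$ each play a distinct role. The bound $\delta'\leq\delta/(2\sqrt 2)$ ensures that the cubes $\{Q_{(\l,\mu)}(\delta')\}$ have pairwise disjoint interiors: distinct points of $\L$ are at Euclidean distance greater than $\delta$, hence at $\ell^\infty$-distance greater than $\delta/\sqrt 2 \geq 2\delta'$, which is exactly the separation needed for two cubes of half-side $\delta'$ to have disjoint interiors. Because the integrand is nonnegative and the cubes overlap only on a null set, the sum of the integrals equals the integral of $|G(x-s,y-t)|^2$ over their union $U:=\bigcup_{(\l,\mu)\in\L\cap Q_{(a,b)}(R)}Q_{(\l,\mu)}(\delta')$. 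The second constraint, $\delta'\leq 1/4$, supplies the padding: if $(\l,\mu)\in Q_{(a,b)}(R)$ then every $(s,t)\in Q_{(\l,\mu)}(\delta')$ satisfies $\|(s,t)-(a,b)\|_\infty\leq R+\delta'\leq R+\tfrac14$, so $U\subseteq Q_{(a,b)}(R+\tfrac14)$. Enlarging the domain of integration from $U$ to $Q_{(a,b)}(R+\tfrac14)$ only increases the (nonnegative) integral, and \eqref{ballestimate1} follows.

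For the complement bound \eqref{ballestimate2} the termwise estimate and the disjointness argument are identical; only the geometric containment is replaced by its dual. If $(\l,\mu)\in Q^c_{(a,b)}(R)$, then $\max\{|\l-a|,|\mu-b|\}>R$, so any $(s,t)\in Q_{(\l,\mu)}(\delta')$ obeys $\max\{|s-a|,|t-b|\}>R-\delta'\geq R-\tfrac14$, whence the corresponding union is contained in $Q^c_{(a,b)}(R-\tfrac14)$; the same extension-of-domain step then yields \eqref{ballestimate2}. I do not anticipate a genuine obstacle here, as the estimate is a direct consequence of Lemma \ref{pointestimate}; the only bookkeeping to handle with care is the simultaneous selection of $\delta'$ small enough to force disjointness yet at most $1/4$ to produce the stated $\pm\tfrac14$ margins, together with the Euclidean-to-$\ell^\infty$ conversion of the separation constant.
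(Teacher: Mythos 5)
Your proposal is correct and takes the same route the paper intends: the paper gives no written proof, stating the lemma as an ``immediate consequence'' of Lemma \ref{pointestimate}, and your argument is precisely the omitted bookkeeping. Applying the pointwise bound with radius $\delta'=\min\{1/4,\,\delta/(2\sqrt{2})\}$, using the separation of $\L$ to make the small cubes essentially disjoint, and checking the two geometric containments that produce the $\pm\frac{1}{4}$ margins is exactly what the authors had in mind.
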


The following lemma relates the time--frequency localization of a function to the decay of its Gabor transform.
We use the notation
\[
L^2_{\alpha}(\R^d)=\left\{F:\int_{\R^{{d}}}
\left(\sum_{l=1}^d|x_{l}|^{\alpha}\right)|F(x)|^2dx<\infty\right\}
\]
to denote the weighted $L^2$ space.

\begin{lemma}\label{inalpha}
Let $\phi(t)=e^{-|t|^2}$. If $g\in L^2(\R)$ satisfies
$
g,\hat{g}\in L^2_{\alpha}(\R),
$
then the Gabor transform of $g$ satisfies
\[
V_{\phi}g\in L^2_{\alpha}(\R^2)
\]
\end{lemma}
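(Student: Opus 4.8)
The plan is to prove the two halves of membership in $L^2_{\alpha}(\R^2)$ separately, namely to bound
\[
I_x := \int_{\R^2} |x|^{\alpha}\,|V_{\phi}g(x,y)|^2\,dx\,dy
\quad\text{and}\quad
I_y := \int_{\R^2} |y|^{\alpha}\,|V_{\phi}g(x,y)|^2\,dx\,dy,
\]
whose sum is $\|V_{\phi}g\|_{L^2_{\alpha}(\R^2)}^2$. The guiding heuristic is that $x$ is the time (translation) variable and $y$ the frequency (modulation) variable, so I expect $I_x$ to be governed by the time-concentration hypothesis $g\in L^2_{\alpha}(\R)$ and $I_y$ by the frequency-concentration hypothesis $\hat g\in L^2_{\alpha}(\R)$. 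Throughout I will use that $\phi(t)=e^{-|t|^2}$ is a Gaussian, so that $\phi$ (and, crucially, $\hat\phi$, which is again a Gaussian) has finite moments of every order: $\int_{\R}|s|^{\alpha}|\phi(s)|^2\,ds<\infty$ and $\int_{\R}|s|^{\alpha}|\hat\phi(s)|^2\,ds<\infty$.

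For $I_x$, I would freeze $x$ and observe that $y\mapsto V_{\phi}g(x,y)$ is precisely the Fourier transform (in $t$) of the $L^2$ function $t\mapsto g(t)\overline{\phi(t-x)}$. Plancherel in $y$ then gives
\[
\int_{\R} |V_{\phi}g(x,y)|^2\,dy = \int_{\R} |g(t)|^2\,|\phi(t-x)|^2\,dt,
\]
so by Fubini $I_x=\int_{\R}|g(t)|^2\big(\int_{\R}|x|^{\alpha}|\phi(t-x)|^2\,dx\big)\,dt$. Substituting $s=t-x$ in the inner integral and invoking the elementary inequality $|t-s|^{\alpha}\le C_{\alpha}(|t|^{\alpha}+|s|^{\alpha})$ (subadditivity for $\alpha\le 1$, convexity for $\alpha>1$) bounds the inner integral by $C_{\alpha}\big(|t|^{\alpha}\|\phi\|_2^2 + \int_{\R}|s|^{\alpha}|\phi(s)|^2\,ds\big)$. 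Feeding this back yields
\[
I_x \le C_{\alpha}\|\phi\|_2^2\int_{\R}|t|^{\alpha}|g(t)|^2\,dt + C_{\alpha}\Big(\int_{\R}|s|^{\alpha}|\phi(s)|^2\,ds\Big)\|g\|_2^2 <\infty,
\]
using $g\in L^2_{\alpha}(\R)\cap L^2(\R)$ and the finite Gaussian moment.

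For $I_y$ I would exploit the time-frequency symmetry, which is really the one idea in the proof. Since the Fourier transform is unitary, $V_{\phi}g(x,y)=\langle g,\phi_{xy}\rangle=\langle \hat g,\widehat{\phi_{xy}}\rangle$, and the commutation relations $\widehat{\mathcal{M}_y f}=\mathcal{T}_y\hat f$, $\widehat{\mathcal{T}_x f}=\mathcal{M}_{-x}\hat f$ together with $\mathcal{T}_y\mathcal{M}_{-x}=e^{2\pi i xy}\mathcal{M}_{-x}\mathcal{T}_y$ give $\widehat{\phi_{xy}}=e^{2\pi i xy}(\hat\phi)_{y,-x}$. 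Hence the fundamental identity $V_{\phi}g(x,y)=e^{-2\pi i xy}V_{\hat\phi}\hat g(y,-x)$, and in particular $|V_{\phi}g(x,y)|=|V_{\hat\phi}\hat g(y,-x)|$. The change of variables $(x,y)\mapsto(y,-x)$, which has unit Jacobian and sends $|y|^{\alpha}$ to $|u|^{\alpha}$, then transforms $I_y$ into exactly the integral $I_x$ with the pair $(g,\phi)$ replaced by $(\hat g,\hat\phi)$. Since $\hat\phi$ is again a Gaussian and $\hat g\in L^2_{\alpha}(\R)\cap L^2(\R)$, the estimate of the previous paragraph applies verbatim and gives $I_y<\infty$. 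Combining the two bounds shows $V_{\phi}g\in L^2_{\alpha}(\R^2)$. The main (and essentially only) obstacle is establishing the fundamental identity relating $V_{\phi}g$ to $V_{\hat\phi}\hat g$; once that symmetry is in hand, both pieces reduce to the same Plancherel-and-moments computation.
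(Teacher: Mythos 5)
Your proof is correct and follows essentially the same route as the paper: the $x$-weighted integral is handled by Parseval/Plancherel in the $y$-variable plus the moment inequality $|x|^{\alpha}\le C_{\alpha}(|t|^{\alpha}+|t-x|^{\alpha})$, and the $y$-weighted integral is reduced to the same computation by passing to the Fourier side, which is exactly the paper's identity $\int_{\R}g(t)\phi(t-x)e^{-2\pi iyt}dt=e^{-2\pi iyx}\int_{\R}\hat g(w)\phi(w+y)e^{-2\pi ixw}dw$. If anything, you are slightly more careful than the paper, which writes the window on the Fourier side as $\phi$ rather than $\hat\phi$; your observation that the window becomes $\hat\phi$, again a Gaussian with finite moments, is the precise statement.
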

\begin{proof}
By Parseval's equality
\begin{align*}
\int_{\R^2}|x|^{\alpha}|V_{\phi}g(x,y)|^2dxdy&=
\int_{\R}|x|^{\alpha}\int_{\R}|{g}(t) \phi(t-x)|^2dtdx\\
&\leq C(\alpha)\int_{\R}(|t|^{\alpha}+|t-x|^{\alpha})\int_{\R}|{g}(t)\phi(t-x)|^2dtdx\\
&= C(\alpha)\big(\ {\|g\|}^2_{L^2(\R)}\|\phi\|^2_{L^2_{\alpha}(\R)}+\|\phi\|^2_{L^2(\R)}\|g\|^2_{L^2_{\alpha}(\R)}\big)<\infty.
\end{align*}

Next, we note that
\[
\int_{\R} {g}(t)\phi(t-x)e^{-2\pi i yt}dt=e^{-2\pi i yx}\int_{\R}\hat{g}(w)\phi(w+y)e^{-2\pi i xw}dw
\]
so we can use the same computation again to complete the proof.
\end{proof}

For additional results of this type see \cite{GR}
\subsection{Systems of vectors in a Hilbert space}
\subsubsection{Definitions}

Let $H$ be a separable Hilbert space. Throughout this paper we will be interested in several different types of systems in $H$.

\begin{definition}
A system of vectors $\{f_n\}\subset H$ is a \textit{Riesz
basis} (RB) in $H$ if there exists an orthonormal sequence $\{e_n\}\subset H$ and a bounded invertible operator
$T : H \rightarrow H $ such that $f_n = Te_n$ for all $n$.
\end{definition}

The following two definitions formalize the ideas of `over--complete basis' and `under--complete basis' respectively.

\begin{definition} A system of vectors $\{f_n\}\subset H$ is a \textit{frame} if
 \begin{equation}\label{frame}
 A\|f\|^2\leq \sum |\langle f,f_n\rangle|^2\leq B\|f\|^2\qquad\forall f\in H,
 \end{equation}
 where $A$ and $B$ are positive constants. $($The best possible constants $A$ and $B$ for which this inequality holds are called the lower and upper frame bounds$)$. A system which satisfies the right side inequality in (\ref{frame}) is called a Bessel sequence.
 \end{definition}

 \begin{definition}
 A system of vectors $\{f_n\}\subset H$ is a \textit{Riesz sequence} $($RS$)$ if it is a Riesz basis in the closure of its linear span. This is equivalent to the statement that,
 \begin{equation}\label{rb}
 A\sum |a_n|^2\leq {\left\|\sum a_nf_n \right\|}^2\leq B\sum |a_n|^2,\qquad\forall \{a_n\}\in l^2,
 \end{equation}
 where $A$ and $B$ are positive constants. $($The best possible constants $A$ and $B$ for which this inequality holds are called the lower and upper RS bounds$)$.

\end{definition}

We will use the following lemma, which follows from a simple duality argument.
\begin{lemma}\label{rb is rs and frame}
A Riesz basis is both a frame and a RS. In this case, the lower and upper RS bounds are equal to the lower and upper frame bounds.
\end{lemma}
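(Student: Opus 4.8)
The plan is to use the factorization $f_n = Te_n$ directly, translating both the frame inequality and the Riesz sequence inequality into statements about the operator norms of $T$ and $T^{-1}$. Since $\{e_n\}$ is a (complete) orthonormal basis of $H$ and $T$ is bounded and invertible, every quantity in sight reduces to $\|T\|$ and $\|T^{-1}\|$, and it is precisely this common parametrization that forces the two sets of bounds to coincide.

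First, for the Riesz sequence inequality, I would fix $\{a_n\}\in\ell^2$ and set $x=\sum_n a_n e_n$, so that $\|x\|^2=\sum_n|a_n|^2$ by Parseval and $\sum_n a_n f_n = Tx$. The two-sided estimate
\[
\|T^{-1}\|^{-1}\|x\|\leq \|Tx\|\leq \|T\|\,\|x\|,
\]
where the lower bound follows from $\|x\|=\|T^{-1}Tx\|\leq\|T^{-1}\|\,\|Tx\|$, then yields (\ref{rb}) with $A=\|T^{-1}\|^{-2}$ and $B=\|T\|^2$.

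Second, for the frame inequality, the duality step is to pass to the adjoint: for every $f\in H$,
\[
\sum_n |\langle f, f_n\rangle|^2 = \sum_n |\langle T^*f, e_n\rangle|^2 = \|T^*f\|^2,
\]
again by Parseval. Using $\|T^*\|=\|T\|$ and $\|(T^*)^{-1}\|=\|(T^{-1})^*\|=\|T^{-1}\|$, the same norm estimates applied to $T^*$ give (\ref{frame}) with exactly the same constants $A=\|T^{-1}\|^{-2}$ and $B=\|T\|^2$. To conclude that these are the \emph{best} constants in both cases, and hence that the optimal frame bounds literally equal the optimal RS bounds, I would observe that $\sup_{\|x\|=1}\|Tx\|^2=\|T\|^2$ and $\inf_{\|x\|=1}\|Tx\|^2=\|T^{-1}\|^{-2}$, the latter because $\inf_{x\neq 0}\|Tx\|/\|x\|=(\sup_{y\neq 0}\|T^{-1}y\|/\|y\|)^{-1}$; the extremal values are approached by unit vectors $x=\sum_n a_n e_n$, and the identical computation with $T^*$ settles the frame side.

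I do not expect a genuine obstacle here. The only point requiring care is the bookkeeping that makes the frame and RS bounds agree: one must verify that passing to the adjoint leaves the relevant operator norms unchanged, i.e. $\|T^*\|=\|T\|$ and $\|(T^{-1})^*\|=\|T^{-1}\|$, which is exactly where the duality enters. I would also flag at the outset that for $\{f_n\}$ to be a basis of all of $H$ the orthonormal system $\{e_n\}$ must be complete; this completeness is what justifies the use of Parseval's identity, rather than merely Bessel's inequality, in both computations above.
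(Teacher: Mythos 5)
Your proof is correct, and it is precisely the ``simple duality argument'' the paper alludes to (the paper itself gives no explicit proof): passing from the synthesis side to the analysis side via the adjoint $T^*$ and using $\|T^*\|=\|T\|$, $\|(T^*)^{-1}\|=\|T^{-1}\|$ to identify both optimal pairs of bounds with $\left(\|T^{-1}\|^{-2},\|T\|^2\right)$. Your remark that $\{e_n\}$ must be a complete orthonormal system for the frame-side Parseval step is also a fair and worthwhile clarification of the paper's definition.
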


The left inequality in the frame condition implies that every frame is complete. The opposite implication does not hold, in fact the frame condition is much stronger then mere completeness. Similarly, the left inequality in the RS condition implies that every RS is \textit{minimal}, i.e., every vector in the system lies outside the closed linear span of the rest of the vectors. The following definition provides for a somewhat stronger version of minimality.

\begin{definition} A system of vectors $\{f_n\}\subset H$ is
 \textit{uniformly minimal} if there exists some $\delta > 0$ such
that $\textrm{dist}(f_n, {\overline{\textrm{span}}}\{f_k\}_{k\neq n}) > \delta$ for all $n$.
\end{definition}

Every RS is, in particular, uniformly minimal. The opposite implication does not hold, i.e., uniform minimality is a much more relaxed notion than the RS condition.

\subsubsection{Dual systems and expansions}

It is well known that a system $\{f_n\}\subset H$ is minimal if and only if there exists a system $\{g_n\}\subset H$ such that the two systems are biorthogonal, i.e. $\langle f_n,g_m\rangle =\delta_{nm}$
 where $\delta_{nm}$ is the Kronecker
delta. The system $\{g_n\}$ is sometimes referred to as a dual system to $\{f_n\}$. A dual system is unique if and only if the system $\{f_n\}$ is complete. We will use the following.
 \begin{lemma}\label{um-dual}
A system $\{f_n\}\subset H$ is uniformly minimal if and only if there exists a \textit{bounded} system $\{g_n\}\subset H$ such that the two are biorthogonal.
 \end{lemma}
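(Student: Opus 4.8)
The plan is to prove both implications through the elementary relationship between the norm of a biorthogonal element and the distance from $f_n$ to the closed span of the remaining vectors. Throughout, write $M_n := \overline{\textrm{span}}\{f_k\}_{k\neq n}$ and $d_n := \textrm{dist}(f_n, M_n)$, so that uniform minimality is exactly the statement that $\inf_n d_n > 0$. The basic observation driving everything is that if $\{g_n\}$ is biorthogonal to $\{f_n\}$, then $\langle f_k, g_n\rangle = 0$ for every $k\neq n$, and hence, by continuity of the inner product, $g_n$ is orthogonal to all of $M_n$.

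For the direction assuming a bounded biorthogonal system exists, say with $\|g_n\|\leq M$ for all $n$, I would argue as follows. For any $h\in M_n$ we have $\langle h, g_n\rangle = 0$, so
\[
1 = |\langle f_n, g_n\rangle| = |\langle f_n - h, g_n\rangle| \leq \|f_n - h\|\,\|g_n\| \leq M\,\|f_n - h\|.
\]
Taking the infimum over $h\in M_n$ gives $d_n \geq 1/M$, uniformly in $n$, so $\{f_n\}$ is uniformly minimal with, for instance, $\delta = 1/(2M)$.

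For the converse, assume $d_n > \delta > 0$ for all $n$. Since each $M_n$ is a closed subspace of $H$, the orthogonal projection of $f_n$ onto $M_n$ exists; call it $h_n$, and set
\[
g_n := \frac{f_n - h_n}{d_n^2}.
\]
Here $f_n - h_n$ is nonzero (as $d_n \geq \delta > 0$), is orthogonal to $M_n$, and satisfies $\|f_n - h_n\| = d_n$. Biorthogonality is then immediate: for $k\neq n$ one has $f_k\in M_n$, whence $\langle f_k, g_n\rangle = 0$, while $\langle f_n, g_n\rangle = \langle f_n - h_n, g_n\rangle = \|f_n - h_n\|^2/d_n^2 = 1$, using $\langle h_n, f_n - h_n\rangle = 0$. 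Finally $\|g_n\| = \|f_n - h_n\|/d_n^2 = 1/d_n < 1/\delta$, so the system $\{g_n\}$ is bounded.

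The only genuine subtlety is that a minimal system may admit many different biorthogonal systems, and a generic one need not be bounded; the real content of the converse is therefore to \emph{produce} a good one. The orthogonal-projection construction above does this canonically, and in fact yields the biorthogonal element of least norm, sharpening the inequality $d_n \geq 1/\|g_n\|$ from the first direction into the exact identity $d_n = 1/\|g_n\|$. I expect no serious obstacle beyond correctly invoking the existence of the orthogonal projection onto the closed subspace $M_n$, which is where the Hilbert space structure is essential.
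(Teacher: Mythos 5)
Your proof is correct and complete. The paper itself states Lemma \ref{um-dual} without proof, deferring to the standard references cited at the end of that section; your argument --- Cauchy--Schwarz combined with $g_n \perp \overline{\textrm{span}}\{f_k\}_{k\neq n}$ to get $d_n \geq 1/\sup_n \|g_n\|$ in one direction, and the orthogonal-projection construction $g_n = (f_n - P_{M_n}f_n)/d_n^2$ with $\|g_n\| = 1/d_n$ in the other --- is precisely the standard proof found there, with the Hilbert space structure entering exactly where you say it does (existence of the projection onto the closed subspace $M_n$). Your closing observation that this construction yields the minimal-norm biorthogonal element, turning the inequality into the identity $d_n = 1/\|g_n\|$, is also correct and is a nice sharpening, though not needed for the lemma.
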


 If $\{f_n\}$ is a RB then the system $\{g_n\}$ above is also a RB which is called the \textit{dual Riesz basis} of $\{f_n\}$. In this case, every $f\in H$ can be decomposed into a series
 \[
 f=\sum\langle f,g_n\rangle f_n=\sum\langle f,f_n\rangle g_n.
 \]

In general, frames do not admit a dual which provides bi-orthogonality. Instead,
we use the notion of a dual frame. If $\{f_n\}$ is a frame in $H$, then there exists a frame $\{g_n\}\subset H$ such
that
every $f\in H$ can be decomposed into a series
 \[
 f=\sum\langle f,g_n\rangle f_n=\sum\langle f,f_n\rangle g_n.
 \]
The system $\{g_n\}$ is called a \textit{dual frame} of $\{f_n\}$.

For more information about systems of vectors in separable Hilbert spaces see e.g. \cite{young} or \cite{C4}.

\section{The main observations and some auxiliary lemmas}

Throughout the rest of the paper we fix $\phi=Ce^{-|t|^2}$ where the constant $C$ is chosen to satisfy $\|\phi\|_{L^2(\R)}=1$. This implies that the STFT with window function $\phi$ is unitary and that Lemmas \ref{ballestimate} and 4 hold.
\subsection{The main observations}
The following two lemmas are the key ingredients of the approach presented in this paper.
\begin{lemma}\label{point}
Let $W\subset L^2(\R)$ be a finite dimensional subspace and let $P_W$ be the orthogonal projection of $L^2(\R)$ on $W$. Then
\[
\int_{\R^2}\|P_W\phi_{xy}\|^2dxdy= \mathrm{dim}\,W.
\]
\end{lemma}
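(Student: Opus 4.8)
The plan is to compute the integral by choosing an orthonormal basis of $W$ and expanding the projection explicitly. Let $\{u_1,\dots,u_n\}$ be an orthonormal basis of $W$, where $n=\dim W$. Then $P_W\phi_{xy}=\sum_{k=1}^n\langle\phi_{xy},u_k\rangle u_k$, and since the $u_k$ are orthonormal,
\[
\|P_W\phi_{xy}\|^2=\sum_{k=1}^n|\langle\phi_{xy},u_k\rangle|^2.
\]
The key observation is that $\langle\phi_{xy},u_k\rangle=\overline{\langle u_k,\phi_{xy}\rangle}=\overline{V_\phi u_k(x,y)}$, so that $|\langle\phi_{xy},u_k\rangle|^2=|V_\phi u_k(x,y)|^2$. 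Therefore the whole computation reduces to integrating $|V_\phi u_k|^2$ over $\R^2$ and summing.

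Next I would invoke the fact, stated in the excerpt, that with $\|\phi\|_{L^2(\R)}=1$ the STFT $V_\phi$ is a unitary operator from $L^2(\R)$ into $L^2(\R^2)$. Unitarity gives the norm identity $\|V_\phi u_k\|_{L^2(\R^2)}=\|u_k\|_{L^2(\R)}=1$, i.e.
\[
\int_{\R^2}|V_\phi u_k(x,y)|^2\,dx\,dy=1
\]
for each $k$. Combining this with the pointwise expansion above and interchanging the finite sum with the integral (justified since each summand is nonnegative and integrable), I obtain
\[
\int_{\R^2}\|P_W\phi_{xy}\|^2\,dx\,dy
=\sum_{k=1}^n\int_{\R^2}|V_\phi u_k(x,y)|^2\,dx\,dy
=\sum_{k=1}^n 1=n=\dim W,
\]
which is exactly the claimed identity.

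There is essentially no hard part here: the statement is a direct consequence of the Parseval/unitarity property of the STFT applied to an orthonormal basis of the finite-dimensional space $W$, and the finite-dimensionality of $W$ is what makes the interchange of sum and integral entirely routine. The only point worth a moment's care is the conjugate-and-reflection identity relating $\langle\phi_{xy},u_k\rangle$ to $V_\phi u_k(x,y)$; once that is recorded, the result follows immediately.
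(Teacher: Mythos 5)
Your proof is correct and is essentially identical to the paper's: both expand $\|P_W\phi_{xy}\|^2$ via an orthonormal basis of $W$, identify each term $|\langle \phi_{xy},u_k\rangle|^2$ with $|V_\phi u_k(x,y)|^2$, and invoke the unitarity of the STFT (with $\|\phi\|_{L^2(\R)}=1$) to integrate each term to $1$. No gaps; the finite sum-integral interchange is routine, exactly as you note.
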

\begin{proof}
Let $M=\mathrm{dim}\,W$ and let $h_1,...,h_M$ be an orthonormal basis for $W$. Then, since we fixed $\phi$ above so that $\|\phi\|_{L^2(\R)}=1$ meaning the STFT is unitary, we have:
\[
\int_{\R^2}\|P_W\phi_{xy}\|^2dxdy=\int_{\R^2}\sum_m|\langle h_m,\phi_{xy}\rangle|^2dxdy
\]
\[
=\sum_m\int_{\R^2}|V_{\phi}h_m(x,y)|^2dxdy=\sum_m\|h_m\|^2=\mathrm{dim}\,W.
\]
\end{proof}

\begin{lemma}\label{anotherpoint}
Let $\{g_n\}$ and $\{h_n\}$ be dual Riesz bases of some closed subspace $W \subset L^2(\R)$. Denote $G_n(x, y) = V_{\phi}g_n(x, y) $
and $H_n(x, y) = V_{\phi}h_n(x, y) $.
Then,
$
\|P_W\phi_{xy}\|^2= \sum_n G_n(x, y)\overline{H_n(x, y)},
$
and therefore
\begin{equation*}
 0\leq \sum_n G_n(x, y)\overline{H_n(x, y)} \leq 1;\qquad\forall (x, y)\in \R^2.
 \end{equation*}
 \end{lemma}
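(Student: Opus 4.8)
The plan is to expand $\|P_W\phi_{xy}\|^2$ using a Riesz-basis expansion and then recognize each coefficient as a short-time Fourier transform. Since $\{g_n\}$ and $\{h_n\}$ are dual Riesz bases of the closed subspace $W$, every element $f\in W$ admits the expansion $f=\sum_n\langle f,h_n\rangle g_n$. First I would note that $P_W\phi_{xy}\in W$, so applying this expansion to $f=P_W\phi_{xy}$ gives $P_W\phi_{xy}=\sum_n\langle P_W\phi_{xy},h_n\rangle g_n$. Because $h_n\in W$ and $P_W$ is the orthogonal projection onto $W$, we have $\langle P_W\phi_{xy},h_n\rangle=\langle \phi_{xy},h_n\rangle=\overline{\langle h_n,\phi_{xy}\rangle}=\overline{V_\phi h_n(x,y)}=\overline{H_n(x,y)}$.

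Next I would compute the squared norm by pairing this expansion against $\phi_{xy}$ itself. Writing
\[
\|P_W\phi_{xy}\|^2=\langle P_W\phi_{xy},P_W\phi_{xy}\rangle=\langle P_W\phi_{xy},\phi_{xy}\rangle,
\]
where the last equality again uses that $P_W$ is a self-adjoint idempotent and $P_W\phi_{xy}\in W$. Substituting the expansion $P_W\phi_{xy}=\sum_n\overline{H_n(x,y)}\,g_n$ and using linearity of the inner product in the first slot yields
\[
\|P_W\phi_{xy}\|^2=\sum_n\overline{H_n(x,y)}\,\langle g_n,\phi_{xy}\rangle=\sum_n\overline{H_n(x,y)}\,V_\phi g_n(x,y)=\sum_n G_n(x,y)\,\overline{H_n(x,y)},
\]
which is the claimed identity. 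The convergence of the series is guaranteed by the Riesz-basis structure, so these manipulations are legitimate.

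Finally, the two-sided inequality follows immediately once the identity is established. The lower bound $0\le\|P_W\phi_{xy}\|^2$ is automatic since it is a norm squared, and the upper bound comes from the fact that orthogonal projections are contractions together with $\|\phi_{xy}\|=\|\phi\|_{L^2(\R)}=1$; indeed $\|P_W\phi_{xy}\|^2\le\|\phi_{xy}\|^2=1$.

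The manipulations here are essentially routine, so there is no deep obstacle; the only point requiring care is the interchange that identifies $\langle P_W\phi_{xy},h_n\rangle$ with $\overline{H_n(x,y)}$ and the symmetric use of self-adjointness of $P_W$ in both computing the coefficients and reducing $\|P_W\phi_{xy}\|^2$ to $\langle P_W\phi_{xy},\phi_{xy}\rangle$. I would make sure to state explicitly that $P_W$ is self-adjoint so that the projection can be freely moved between the two arguments of the inner product, since this is what makes both the coefficient identification and the norm reduction work.
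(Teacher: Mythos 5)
Your proof is correct and follows essentially the same argument as the paper: both rest on the dual Riesz basis expansion $P_W\phi_{xy}=\sum_n\langle \phi_{xy},h_n\rangle g_n$ and the identity $\|P_W\phi_{xy}\|^2=\langle P_W\phi_{xy},\phi_{xy}\rangle$, with the paper simply running the computation from the sum toward the norm rather than the reverse. Your explicit remarks on self-adjointness of $P_W$ and the contraction bound $\|P_W\phi_{xy}\|^2\le\|\phi_{xy}\|^2=1$ only spell out what the paper leaves implicit.
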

 \begin{proof}
Let $P_W$ be the orthogonal projection onto $W$. We have,
\[
\sum_n G_n(x, y)\overline{H_n(x, y)}=\sum_n\langle g_n,\phi_{xy}\rangle_{L^2(\R)}\langle \phi_{xy},h_n\rangle_{L^2(\R)}
\]
\[
=\langle \big(\sum_n\langle \phi_{xy},h_n\rangle g_n\big), \phi_{xy}\rangle=\langle P_W\phi_{xy},\phi_{xy}\rangle=\|P_W\phi_{xy}\|^2.
\]
\end{proof}

\subsection{Auxiliary lemmas}
For $G\in L^2(\R^2)$ and $R>1$ denote
\[
I_G(R):=\int_{Q_0(R)}\int_{ Q^c_0(R+{\frac{1}{4}})}|G(t-x,s-y)|^2dtdsdxdy
\]
\begin{lemma}\label{err1}
Let $G\in L^2(\R^2)$.  then
\[
\frac{1}{R^2}I_G(R)\rightarrow 0 \qquad \textrm{as }R\rightarrow\infty.
\]
\end{lemma}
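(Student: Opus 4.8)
The plan is to collapse the quadruple integral into a single integral of $|G|^2$ against an $R$-dependent weight, and then finish by dominated convergence. First I would apply Fubini together with the change of variables $u=t-x$, $v=s-y$ (for each fixed $(x,y)$), which rewrites
\[
I_G(R)=\int_{\R^2}|G(u,v)|^2\,w_R(u,v)\,du\,dv,
\]
where $w_R(u,v)$ is the Lebesgue measure of the set of base points $(x,y)\in Q_0(R)$ for which $(u+x,v+y)\in Q^c_0(R+\tfrac14)$. Intuitively this weight records, for each displacement $(u,v)$, the measure of base points inside the cube that get pushed outside the slightly enlarged cube; the whole point is that this can happen only near the boundary, so the weight is of lower order than $R^2$.

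Next I would estimate $w_R$ in two complementary ways. Trivially $w_R(u,v)\le |Q_0(R)|=(2R)^2$. For the refined bound, note that $(u+x,v+y)\in Q^c_0(R+\tfrac14)$ means $|u+x|>R+\tfrac14$ or $|v+y|>R+\tfrac14$, so a union bound gives $w_R(u,v)\le 2R\,\big(a_R(u)+a_R(v)\big)$, where $a_R(u):=\big|\{x\in[-R,R]:|u+x|>R+\tfrac14\}\big|$ is the one-dimensional boundary contribution. A direct computation of this one-dimensional measure yields $a_R(u)\le|u|$ for every $u$ and every $R$, and therefore $w_R(u,v)\le 2R\,(|u|+|v|)$.

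Combining the two estimates gives
\[
\frac{w_R(u,v)}{R^2}\le \min\Big\{4,\ \tfrac{2}{R}\big(|u|+|v|\big)\Big\},
\]
which is bounded above by the constant $4$ and tends to $0$ as $R\to\infty$ for each fixed $(u,v)$. Since $|G|^2\in L^1(\R^2)$, applying the dominated convergence theorem (with dominating function $4|G|^2$) to
\[
\frac{1}{R^2}I_G(R)=\int_{\R^2}|G(u,v)|^2\,\frac{w_R(u,v)}{R^2}\,du\,dv
\]
yields the claimed limit $\tfrac{1}{R^2}I_G(R)\to 0$.

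The only genuinely delicate step is the weight estimate: one must recognize that the ``bad'' base points—those escaping the enlarged cube—form a boundary layer of measure $O\big(R(|u|+|v|)\big)$ rather than $O(R^2)$, which is exactly what produces the extra power of $R$ needed for the decay. Everything else is routine Fubini and dominated convergence, with the domination supplied by the crude bound $w_R\le(2R)^2$.
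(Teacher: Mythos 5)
Your proof is correct; I checked the two key computations (the union bound for the escape set and the one\--dimensional estimate $a_R(u)\le |u|$) and both hold, so the dominated convergence argument goes through. Your route, however, is genuinely different from the paper's. The paper never collapses the integral: it splits the outer domain $Q_0(R)$ into the inner cube $A=Q_0(R-\sqrt{R})$ and the shell $A^c=Q_0(R)\setminus A$, bounds the shell's contribution by $\|G\|^2\,|A^c|\le C R^{3/2}=o(R^2)$, and observes that for base points in $A$ the shifted inner domain lies inside $Q^c_0(\sqrt{R})$, so that the inner integral is at most the tail $\int_{Q^c_0(\sqrt{R})}|G|^2$, which tends to zero by absolute continuity of the integral. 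So the paper trades your weight computation for a $\sqrt{R}$\--splitting trick plus tail decay, with no geometry of the escape set needed beyond one crude inclusion. What your version buys is quantitativeness: the bound $w_R(u,v)\le\min\{4R^2,\,2R(|u|+|v|)\}$ gives immediately $I_G(R)\le 2R\int_{\R^2}(|u|+|v|)|G(u,v)|^2\,du\,dv$, which is precisely the case $\alpha=1$ of Lemma \ref{err2}; interpolating the minimum, $\min\{4R^2,2R(|u|+|v|)\}\le C(\alpha)R^{2-\alpha}(|u|^{\alpha}+|v|^{\alpha})$, recovers all cases $0<\alpha\le 1$ of that lemma as well, so your single weight estimate begins to unify the two auxiliary lemmas (the cases $\alpha>1$ would require the sharper fact that $a_R(u)$ vanishes for $|u|\le\frac14$ and a finer integration). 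What the paper's version buys is that it needs no change of variables and no evaluation of the escape measure at all --- only the trivial bound $\|G\|^2$ and the fact that tails of an integrable function vanish --- which keeps it as soft as possible given that Lemma \ref{err1} asks only for a qualitative $o(R^2)$ statement.
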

\begin{proof}
Let $A:= Q_0(R-\sqrt{R})$ and $A^c:=Q_0(R)\setminus A$. We note that $\int_{Q_0(R)}=\int_A+\int_{A^c}$ and write $I_G(R)=I_A+I_{A^c}$ accordingly.
We have,
\[
\frac{I_{A^c}}{{R^2}}
\leq\frac{|A^c|\|G\|^2}{R^2}\leq C\frac{R^{1.5}}{R^2}\longrightarrow 0,
\]
and
\begin{align*}
\frac{I_A}{{R^2}}  &\leq\frac{1}{R^2}\int_{ Q_0(R-\sqrt{R})}\int_{ Q^c_0(\sqrt{R})}|G(t,s)|^2dtdsdxdy\\
&\leq \int_{ Q^c_0(\sqrt{R})}|G(t,s)|^2dsdt\longrightarrow 0,
\end{align*}
since $G\in L^2(\R^2)$.
\end{proof}

Recall the notation,
\[
L^2_{\alpha}(\R^d)=\left\{F:\int_{\R^{{d}}} \left(\sum_{l=1}^d|x_{l}|^{\alpha}\right)|F(x)|^2dx<\infty\right\}.
\]

\begin{lemma}\label{err2}
If $G\in L^2_{\alpha}(\R)$ then
\[
I_G(R)\leq
\left\{
	\begin{array}{lll}
	C(\alpha)\|G\|^2_{L^2_{\alpha}(\R^2)}R^{2-\alpha}&  0<\alpha< 2 \\
		C(\alpha)\|G\|^2_{L^2_{\alpha}(\R^2)}\log R &  \alpha=2\\
C(\alpha)\|G\|^2_{L^2_{\alpha}(\R^2)}&  \alpha>2
	\end{array}
\right.
\]
where $C(\alpha)$ is a positive constant depending only on $\alpha$.
\end{lemma}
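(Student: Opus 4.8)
The plan is to linearize $I_G(R)$ by a change of variables, turning it into a single integral of $|G|^2$ against a purely geometric weight, and then to play that weight off against $|u|^\alpha+|v|^\alpha$. First I would substitute $u=t-x,\ v=s-y$ in the inner integral and apply Tonelli (all integrands being nonnegative) to interchange the order of integration. This gives
\[
I_G(R)=\int_{\R^2}|G(u,v)|^2\,m_R(u,v)\,du\,dv,\qquad m_R(u,v):=\big|\{(x,y)\in Q_0(R):(u+x,v+y)\in Q^c_0(R+\tfrac14)\}\big|,
\]
so everything reduces to understanding the overlap measure $m_R$.

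Next I would estimate $m_R$ geometrically. The defining condition $|u+x|>R+\tfrac14$ or $|v+y|>R+\tfrac14$ constrains one coordinate at a time, so a union bound gives $m_R(u,v)\le m_R^1(u)+m_R^2(v)$, where $m_R^1(u)=2R\,\big|\{x\in[-R,R]:|u+x|>R+\tfrac14\}\big|$ and $m_R^2$ is the mirror term. Computing the one-dimensional length explicitly yields $m_R^1(u)=2R\max\{0,\min(|u|-\tfrac14,\,2R)\}$; in particular $m_R^1\equiv0$ on $|u|\le\tfrac14$, while $m_R^1(u)\le 2R|u|$ and $m_R^1(u)\le 4R^2$ everywhere. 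By symmetry it then suffices to bound $\int_\R m_R^1(u)\,p(u)\,du$ with $p(u)=\int_\R|G(u,v)|^2\,dv$ and to add the twin $v$-term; each of these is to be estimated against $\int_\R|u|^\alpha p(u)\,du\le\|G\|^2_{L^2_\alpha(\R^2)}$.

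I would then split the $u$-axis at the scale $|u|\sim R$. In the far field $|u|>2R$ I use $m_R^1\le 4R^2$ together with the elementary inequality $4R^2\le 2^{2-\alpha}R^{2-\alpha}|u|^\alpha$, which is legitimate precisely because $|u|>2R$; this contributes at most a multiple of $R^{2-\alpha}\|G\|^2_{L^2_\alpha}$ in every regime (and is $o(1)$ once $\alpha>2$, consistent with $\rho_\alpha\equiv1$ there). The decisive contribution is the near field $\tfrac14<|u|\le 2R$, where $m_R^1(u)\le 2R|u|$; here one wants $\int 2R|u|\,p(u)\,du\lesssim\rho_\alpha(R)\int|u|^\alpha p(u)\,du$, and the three regimes of $\rho_\alpha$ should emerge from the scale integral $\int_1^R s^{1-\alpha}\,ds$, which is $\asymp R^{2-\alpha}$ for $\alpha<2$, $\asymp\log R$ for $\alpha=2$, and $\asymp 1$ for $\alpha>2$.

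The step I expect to be the genuine obstacle is exactly this near-field estimate. The pointwise ratio $m_R^1(u)/|u|^\alpha$ is maximized, once $\alpha>1$, at a fixed scale $|u|=\tfrac{\alpha}{4(\alpha-1)}$ that is independent of $R$, where it is of order $R$; consequently the crude comparison $\int m_R^1 p\le\big(\sup_u m_R^1(u)/|u|^\alpha\big)\int|u|^\alpha p$ only yields the rate $R$, which for $\alpha>1$ is larger than the claimed $\rho_\alpha(R)$. Thus a pointwise bound $m_R^1(u)\le C\rho_\alpha(R)|u|^\alpha$ is false at intermediate scales, and the clean rate cannot be extracted from the weighted norm $\|G\|^2_{L^2_\alpha}$ by itself. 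To reach the stated estimate one must exploit that the mass of $|G|^2$ cannot pile up at a single intermediate scale — that is, the time–frequency structure of $G=V_\phi g$ (the averaging/reproducing estimates of Lemmas \ref{pointestimate} and \ref{ballestimate}, and ultimately the fact that the relevant $G$ is spread over a region of large area) forces the dyadic blocks $|u|\sim 2^k$ to contribute with genuine decay rather than with their worst-case height. Converting that spreading into the correct per-scale bound, so that the near-field sum collapses to $\int_1^R s^{1-\alpha}\,ds\asymp\rho_\alpha(R)$, is where I would concentrate the detailed work.
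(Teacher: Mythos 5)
Your reduction is correct, and for $0<\alpha\le 1$ it already yields a complete proof: one has the pointwise bound $m_R^1(u)\le 2^{2-\alpha}R^{2-\alpha}|u|^{\alpha}$ on all of $\R$ (the near field because $|u|\le 2R$, the far field because $4R^2\le 2^{2-\alpha}R^{2-\alpha}|u|^{\alpha}$ there), hence $I_G(R)\le C(\alpha)R^{2-\alpha}\|G\|^2_{L^2_{\alpha}(\R^2)}$, which is the claimed rate in that range. But the obstacle you isolate for $\alpha>1$ is not a defect of your method that extra structure will repair; it is a disproof of the lemma. The point is not mass ``piling up'' at an intermediate scale: \emph{any} fixed chunk of $|G|^2$-mass at any fixed scale beyond $\frac14$ is hit by a boundary strip of area $\asymp R$. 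Concretely, take $G=\mathbf{1}_{[1,2]\times[0,1]}$. For $(u,v)$ in the support, $m_R(u,v)\ge \big|\{(x,y)\in Q_0(R):u+x>R+\tfrac14\}\big|=2R\,(u-\tfrac14)\ge\tfrac{3}{2}R$, so $I_G(R)\ge\tfrac{3}{2}R$, while $\|G\|^2_{L^2_{\alpha}(\R^2)}\le 2^{\alpha}+1$; since $\rho_{\alpha}(R)=o(R)$ for every $\alpha>1$, the stated inequality fails for large $R$. Your hoped-for rescue via the structure of $V_{\phi}g$ cannot exist either: the lemma quantifies over all $G\in L^2_{\alpha}(\R^2)$, and in any case $G=V_{\phi}\phi$ (a Gaussian on $\R^2$) has fixed positive mass on $[1,2]\times[0,1]$, so $I_G(R)\ge cR$ for STFTs as well. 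The correct conclusion from your computation $\sup_u m_R^1(u)/|u|^{\alpha}\asymp R$ is this counterexample; the true general bound, which your union-bound argument proves and which is sharp, is $I_G(R)\le C(\alpha)\max\left(R^{2-\alpha},R\right)\|G\|^2_{L^2_{\alpha}(\R^2)}$.

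For comparison, the paper's own proof is a short chain whose first step multiplies the integrand by the ratio $\left(|t-x|^{\alpha}+|s-y|^{\alpha}\right)/\left((R+\tfrac14-|x|)^{\alpha}+(R+\tfrac14-|y|)^{\alpha}\right)$, tacitly asserting it is at least $1$ on the domain of integration. That is false: membership in $Q^c_0(R+\tfrac14)$ forces only \emph{one} of $|t|>R+\tfrac14$, $|s|>R+\tfrac14$, so the numerator dominates only one of the two terms in the denominator; at $(x,y)=(R,0)$, $(t,s)=(R+\tfrac12,0)$ the ratio is $(\tfrac12)^{\alpha}/\left((\tfrac14)^{\alpha}+(R+\tfrac14)^{\alpha}\right)\to 0$. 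In effect the paper's weight is small except near the \emph{corners} of the cube, whereas the true overlap measure is large along the whole boundary strip, of area $\asymp R$ --- exactly the discrepancy your analysis exposes. The valid substitute puts $\min\left\{(R+\tfrac14-|x|)^{\alpha},(R+\tfrac14-|y|)^{\alpha}\right\}$ in the denominator, and redoing the computation then gives order $R^{2-\alpha}$ for $\alpha<1$ but only order $R$ (with a logarithm at $\alpha=1$) beyond, matching your diagnosis and the counterexample. So your blind attempt reads the situation more accurately than the paper: Lemma \ref{err2} is true only for $0<\alpha\le 1$, its proof in the paper is invalid as written, and accordingly the rates $\rho_{\alpha}(R)$ claimed in Theorem \ref{withest} are, via Theorem \ref{main}, only justified in that range, with a deviation term of order $R$ for $\alpha>1$.
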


\begin{proof}
We have
\[
\int_{Q_0(R)}\int_{ Q^c_0(R+\frac{1}{4})}|G(t-x,s-y)|^2dsdtdxdy
\]
\[
\leq\int_{Q_0(R)}\int_{Q^c_0(R+\frac{1}{4})}\frac{|t-x|^{\alpha}+|s-y|^{\alpha}}{(R+\frac{1}{4}-|x|)^{\alpha}+(R+\frac{1}{4}-|y|)^{\alpha}}|G(t-x,s-y)|^2dsdtdxdy
\]
\[
\leq\|G\|^2_{L^2_{\alpha}(\R^2)}\int_{Q_0(R)}\frac{1}{(R+\frac{1}{4}-|x|)^{\alpha}+(R+\frac{1}{4}-|y|)^{\alpha}}dxdy
\]
\[
\leq C(\alpha)\|G\|^2_{L^2_{\alpha}(\R^2)}\int_0^{R}\int_0^{R}\frac{1}{(2R+\frac{1}{2}-x-y)^{\alpha}}dxdy.
\]
The result follows.
\end{proof}

   \section{Proofs for Theorem A and Theorem 1}

In this section we use Lemma \ref{point} to prove the following stronger version of Theorem A. Theorem A itself, as well as Theorem \ref{withest} follow from this theorem as corollaries.
\begin{theorem}\label{main}
Let $g\in L^2(\R)$ and let $\L\subset\R^2$ be a uniformly discrete sequence, with separation constant $\delta$. Denote $G:=V_{\phi}g$. Then,
\begin{itemize}
\item[1.] If $G(g,\L)$ is a Riesz sequence then
\[
\sup_{(a,b)\in\R^2}|\L\cap Q_{(a,b)}(R)|\leq  (2R+1)^2+CI_G\left(R+\frac{1}{4}\right).
\]
\item[2.] If $G(g,\L)$ is a frame then
\[
\inf_{(a,b)\in\R^2}|\L\cap Q_{(a,b)}(R)|\geq  (2R-1)^2-CI_G\left(R-\frac{1}{2}\right).
\]
\end{itemize}

Where $C$ denotes a positive constant which may depend on $\delta$ and the Riesz sequence or frame bounds.
\end{theorem}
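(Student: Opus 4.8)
The plan is to translate both the Riesz sequence and frame hypotheses into statements about the quantity $\|P_W\phi_{xy}\|^2$ for a well-chosen finite dimensional subspace $W$, and then integrate using Lemma \ref{point}. The guiding principle is that Lemma \ref{point} turns a dimension count into an integral of $\|P_W\phi_{xy}\|^2$ over $\R^2$, while Lemma \ref{anotherpoint} bounds this integrand pointwise between $0$ and $1$. To prove part 1, fix a cube $Q_{(a,b)}(R)$ and let $\L'=\L\cap Q_{(a,b)}(R)$. Since $G(g,\L)$ is a Riesz sequence, the finite subsystem $\{g_{\l\mu}\}_{(\l,\mu)\in\L'}$ is a Riesz basis for its span $W$, so $|\L'|=\mathrm{dim}\,W$. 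By Lemma \ref{point} this equals $\int_{\R^2}\|P_W\phi_{xy}\|^2\,dxdy$. Now I would split this integral over $\R^2$ into the contribution from a slightly enlarged cube $Q_{(a,b)}(R+\tfrac14)$, where I bound the integrand by $1$ to get at most $(2R+\tfrac12)^2\le(2R+1)^2$, and the contribution from the complement, which must be controlled by an error term.

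The heart of the argument is bounding the complementary integral by $C\,I_G(R+\tfrac14)$. Using the dual Riesz basis $\{h_{\l\mu}\}$ and Lemma \ref{anotherpoint}, I write $\|P_W\phi_{xy}\|^2=\sum_{(\l,\mu)\in\L'}G_{\l\mu}(x,y)\overline{H_{\l\mu}(x,y)}$, apply Cauchy--Schwarz pointwise, and use the uniform boundedness of the dual system (the norms of $h_{\l\mu}$ are controlled by the Riesz bounds, and the STFT is an isometry) to reduce to estimating $\sum_{(\l,\mu)\in\L'}|G_{\l\mu}(x,y)|^2=\sum_{(\l,\mu)\in\L'}|G(x-\l,y-\mu)|^2$ over the region $(x,y)\in Q^c_{(a,b)}(R+\tfrac14)$. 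Here the point is that when $(x,y)$ is far outside the cube but the lattice points $(\l,\mu)$ lie inside it, the arguments $(x-\l,y-\mu)$ are forced to be large, which is exactly the regime where $I_G$ measures the tail mass. Translating by $(a,b)$ to center at the origin and invoking Lemma \ref{ballestimate} to pass from the discrete sum to an integral over a slightly larger cube should produce precisely the integrand appearing in $I_G(R+\tfrac14)$. The main obstacle, and where care is needed, is matching the geometry: ensuring that the $\tfrac14$ shifts from Lemma \ref{ballestimate} combine correctly with the inner/outer cube split so that the complementary term is genuinely dominated by $I_G$ and not by some larger quantity.

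For part 2, the strategy is dual. Given that $G(g,\L)$ is a frame, I fix the cube and let $W$ be the span of $\{g_{\l\mu}\}_{(\l,\mu)\in\L'}$ with $\L'=\L\cap Q_{(a,b)}(R)$; now I want a \emph{lower} bound on $|\L'|$. Rather than using $\dim W$ directly, I would exploit that the full frame gives $\int_{\R^2}\|P\phi_{xy}\|^2\,dxdy$ large, where $P$ now projects related to the full system, and show that the lattice points outside the cube contribute little to reproducing $\phi_{xy}$ when $(x,y)$ sits well inside $Q_{(a,b)}(R-\tfrac12)$. Concretely, for $(x,y)$ in the smaller cube $Q_{(a,b)}(R-\tfrac12)$ the frame reproduction formula expresses $\|\phi_{xy}\|^2=1$ as a sum over all of $\L$; the tail over $\L\cap Q^c_{(a,b)}(R)$ is again controlled by $I_G(R-\tfrac12)$ via Lemma \ref{ballestimate} (estimate \eqref{ballestimate2}) and Cauchy--Schwarz with the dual frame bounds, so the inner part must account for nearly the full measure $(2R-1)^2$ of the smaller cube, forcing $|\L'|\ge(2R-1)^2-C\,I_G(R-\tfrac12)$ through Lemma \ref{point} applied to $W$. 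Finally, Theorem A follows by combining this with Lemma \ref{err1}, which gives $I_G(R)/R^2\to0$, and Theorem \ref{withest} follows by inserting the explicit bounds of Lemma \ref{err2} together with Lemma \ref{inalpha}. I expect the frame direction to be the more delicate of the two, since one must argue that projecting onto the truncated system loses only the controllable tail.
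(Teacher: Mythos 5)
Your frame half (part 2) contains a genuine gap: the choice of $W$. You take $W=\mathrm{span}\{g_{\l\mu}:(\l,\mu)\in\L\cap Q_{(a,b)}(R)\}$ and then want to truncate a frame expansion of $\phi_{xy}$ to the indices in the cube, with an error controlled by $I_G$. But the two expansions $\phi_{xy}=\sum_{\L}\langle\phi_{xy},\widetilde{g}_{\l\mu}\rangle g_{\l\mu}=\sum_{\L}\langle\phi_{xy},g_{\l\mu}\rangle\widetilde{g}_{\l\mu}$ pull in opposite directions here. The partial sums that lie in \emph{your} $W$ are those of the first expansion, whose tail coefficients are $\langle\phi_{xy},\widetilde{g}_{\l\mu}\rangle$; for irregular $\L$ the canonical dual $\{\widetilde{g}_{\l\mu}\}$ is in general \emph{not} a Gabor system, so these coefficients are not of the form $\widetilde{G}(x-\l,y-\mu)$ for any fixed $\widetilde{G}\in L^2(\R^2)$, and neither \eqref{ballestimate2} nor $I_G$ says anything about them. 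The second expansion has the good coefficients, with $|\langle\phi_{xy},g_{\l\mu}\rangle|=|G(x-\l,y-\mu)|$ by \eqref{translations}, but its partial sums lie in the span of the $\widetilde{g}_{\l\mu}$, not in your $W$, so they do not bound $\|P^{+}\phi_{xy}\|=\mathrm{dist}(\phi_{xy},W)$. The paper (following \cite{GRO}) resolves exactly this tension by swapping roles: take $W=\mathrm{span}\{\widetilde{g}_{\l\mu}:(\l,\mu)\in\L\cap Q_{(a,b)}(R)\}$. One still has $\dim W\le|\L\cap Q_{(a,b)}(R)|$, which is all that Lemma \ref{point} needs, while now the truncation of the second expansion lies in $W$, giving $\|P^{+}\phi_{xy}\|^2\le\big\|\sum_{\L\cap Q^c_{(a,b)}(R)}\langle\phi_{xy},g_{\l\mu}\rangle\widetilde{g}_{\l\mu}\big\|^2\le B\sum_{\L\cap Q^c_{(a,b)}(R)}|G(x-\l,y-\mu)|^2$ by the Bessel (synthesis) bound of the dual frame, after which \eqref{ballestimate2} yields the $I_G(R-\frac12)$ error. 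Without this swap the tail in your argument is not controllable.

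Your Riesz-sequence half (part 1) has the paper's skeleton, but the key pointwise reduction is not justified as stated. Writing $\L'=\L\cap Q_{(a,b)}(R)$: Cauchy--Schwarz applied to $\sum_{\L'}G_{\l\mu}\overline{H_{\l\mu}}$ together with mere norm-boundedness of the dual basis gives $\|P\phi_{xy}\|^2\le\big(\sum_{\L'}|G(x-\l,y-\mu)|^2\big)^{1/2}\big(\sum_{\L'}|H_{\l\mu}(x,y)|^2\big)^{1/2}$, and bounding each $|H_{\l\mu}(x,y)|$ by $\|h_{\l\mu}\|$ costs a factor $|\L'|^{1/2}$, which destroys the estimate. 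What closes it is the frame inequality \emph{inside} $W$ (Lemma \ref{rb is rs and frame}): the finite system $\{g_{\l\mu}\}_{\L'}$ is a frame for $W$ with lower bound $A$, so $A\|P\phi_{xy}\|^2\le\sum_{\L'}|\langle P\phi_{xy},g_{\l\mu}\rangle|^2=\sum_{\L'}|G(x-\l,y-\mu)|^2$ directly; this one line is the paper's Step I and makes Lemma \ref{anotherpoint} and Cauchy--Schwarz unnecessary (equivalently, use the upper frame bound $1/A$ of the dual basis in $W$ to get $\sum_{\L'}|H_{\l\mu}(x,y)|^2\le\|P\phi_{xy}\|^2/A$ and divide by $\|P\phi_{xy}\|$). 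Also fix the geometry you flagged: split at $R+\frac12$, not $R+\frac14$. Bounding the integrand by $1$ on $Q_{(a,b)}(R+\frac12)$ gives $(2R+1)^2$, and then \eqref{ballestimate1} (sum over $\L'$ dominated by an integral over $Q_{(a,b)}(R+\frac14)$) integrated over $Q^c_{(a,b)}(R+\frac12)$ is, after translation, exactly $I_G(R+\frac14)$; the $\frac14$ gap between the inner cube and the outer complement is precisely what the definition of $I_G$ requires, whereas your split produces a double integral with no gap, which is not bounded by $I_G(R+\frac14)$. Your final deductions of Theorem A from Lemma \ref{err1} and of Theorem \ref{withest} from Lemmas \ref{inalpha} and \ref{err2} are correct as stated.
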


\textbf{Remark 1. } Clearly, Theorem A follows from Theorem \ref{main} and Lemma \ref{err1} while Theorem \ref{withest} follows from Theorem \ref{main} combined with Lemmas \ref{inalpha} and \ref{err2}.

\textbf{Remark 2. } Keeping in mind the discussion after the formulation of Theorem \ref{withest}, we see that $I_G(R)$ determines how much the distribution of $\L$ can deviate from the distribution of the integer lattice.
$I_G(R)$ takes here the role that $\log R$ played in Landaus result for a finite union of intervals in \cite{La}; the role of the second term of convergence when $R$ tends to infinity.

We now turn to the proof of Theorem \ref{main}.

\subsection{Proof of Theorem \ref{main} part 1, Riesz sequences} Assume that $G(g,\L)$ is a Riesz sequence and denote its lower Riesz sequence bound by $A$.

Fix $(a,b)\in\R^2$ and $R>0$. Denote
\[
W=\textrm{span}\{g_{\l\mu}:(\l,\mu)\in\L\cap Q_{(a,b)}(R)\},
\]
and let $P=P_W$ be the orthogonal projection of $L^2(\R)$ onto $W$.\\

$\:$

\textbf{Step I.} Clearly, the (finite) system $\{g_{\l,\mu}:(\l,\mu)\in \L\cap Q_{(a,b)}(R)\}$ is a Riesz basis for $W$ with lower Riesz basis bound $A$. In particular, this implies that
\begin{equation}\label{dim}
\mathrm{dim}\,W=|\L\cap Q_{(a,b)}(R)|
\end{equation}
Moreover, it follows from Lemma \ref{rb is rs and frame} that $\{g_{\l,\mu}:(\l,\mu)\in \L\cap Q_{(a,b)}(R)\}$ is a frame in $W$ with lower frame bound $A$. So, for every $(x,y)\in\R^2$,
\begin{align*}
\|P\phi_{xy}\|^2&\leq\frac{1}{A}\sum_{ \L\cap Q_{(a,b)}(R)}|\langle P\phi_{xy},g_{\l\mu}\rangle|^2
=\frac{1}{A}\sum_{ \L\cap Q_{(a,b)}(R)}|\langle \phi_{xy},g_{\l\mu}\rangle|^2\\&=\frac{1}{A}\sum_{ \L\cap Q_{(a,b)}(R)}|G(x-\l,y-\mu)|^2,
\end{align*}
where the last equality follows from (\ref{translations}). Since $\L$ is uniformly discrete, say with separation constant $\delta>0$, we may apply (\ref{ballestimate1}). We find that
\begin{equation}\label{almostI}
\|P\phi_{xy}\|^2\leq\frac{C(\delta)}{A}\int_{ Q_{(a,b)}(R+\frac{1}{4})}|G(x-s,y-t)|^2dsdt.
\end{equation}

$\:$

\textbf{Step II.} For every $(x,y)\in\R^2$ we have $$\|P\phi_{xy}\|^2\leq 1.$$ Integrating over $Q_{(a,b)}(R+\frac{1}{2})$ we obtain,
\[
\int_{Q_{(a,b)}(R+\frac{1}{2})}\|P\phi_{xy}\|^2dxdy\leq  (2R+1)^2
\]
So,
\[
\int_{\R^2}\|P\phi_{xy}\|^2dxdy\leq (2R+1)^2+\int_{Q^c_{(a,b)}(R+\frac{1}{2})}\|P\phi_{xy}\|^2dxdy.
\]
By Lemma \ref{point} the left hand side is equal to $\mathrm{dim}\,W$ which, by (\ref{dim}), is equal to $|\L\cap Q_{(a,b)}(R)|$.
Substituting (\ref{almostI}) in the right hand side, we get
\[
|\L\cap Q_{(a,b)}(R)|\leq (2R+1)^2+\frac{C(\delta)}{A}\int_{Q^c_{(a,b)}(R+\frac{1}{2})}\int_{ Q_{(a,b)}(R+\frac{1}{4})}|G(x-t,y-s)|^2dtdsdxdy.
\]
The result for Riesz sequences now follows from a simple change of variables in the integral.

\subsection{Proof of Theorem \ref{main} part 2, frames} Assume that $G(g,\L)$ is a frame, denote its dual frame by $\{\widetilde{g}_{\l\mu}\}$.

Fix $(a,b)\in\R^2$ and $R>0$. Denote
\[
W=\textrm{span}\{\widetilde{g}_{\l\mu}:{(\l,\mu)}\in\L\cap Q_{(a,b)}(R)\},
\]
and let $P$ and $P^+$ be the orthogonal projections of $L^2(\R)$ on $W$ and its orthogonal complement respectively.

Clearly,
\begin{equation}\label{dimframe}
\mathrm{dim}\,W\leq |\L\cap Q_{(a,b)}(R)|
\end{equation}

$\:$

\textbf{Step I.} This step follows an idea from \cite{GRO}. Since $\{\widetilde{g}_{\l\mu}\}$ is a frame, say with upper bound $B$, we have for every $(x,y)\in\R^2$,
\begin{align*}
\|P^+\phi_{xy}\|^2\leq& \|\phi_{xy}-\sum_{\L\cap Q_{(a,b)}(R)}\langle \phi_{xy}, g_{\l\mu}\rangle\widetilde{g}_{\l\mu}\|^2
=\|\sum_{\L\cap Q^c_{(a,b)}(R)}\langle \phi_{xy}, g_{\l\mu}\rangle\widetilde{g}_{\l\mu}\|^2\\
\leq&B\sum_{\L\cap Q^c_{(a,b)}(R)}|\langle \phi_{xy}, g_{\l\mu}\rangle|^2=B\sum_{\L\cap Q^c_{(a,b)}(R)}| G(x-\l,y-\mu)|^2.
\end{align*}

Since $\L$ is uniformly discrete, say with separation constant $\delta$, we may apply (\ref{ballestimate2}). We find that
\begin{equation}\label{almostIf2}
\|P^+\phi_{xy}\|^2\leq BC(\delta)\int_{ Q^c_{(a,b)}(R-\frac{1}{4})}|G(x-t,y-s)|^2dtds.
\end{equation}

$\:$

\textbf{Step II.} For every $(x,y)\in\R^2$ we have $1=\|P\phi_{xy}\|^2+\|P^+\phi_{xy}\|^2$. Integrating over $Q_{(a,b)}(R-\frac{1}{2})$ we obtain,
\[
(2R-1)^2=\int_{Q_{(a,b)}(R-\frac{1}{2})}\|P\phi_{xy}\|^2dxdy+\int_{Q_{(a,b)}(R-\frac{1}{2})}\|P^+\phi_{xy}\|^2dxdy.
\]
We first note that, by lemma \ref{point} and (\ref{dimframe})
\[
\int_{Q_{(a,b)}(R-\frac{1}{2})}\|P\phi_{xy}\|^2dxdy\leq\textrm{dim}W\leq |\L\cap Q_{(a,b)}(R)|.
\]
Next, (\ref{almostIf2}) implies that
\[
\int_{Q_{(a,b)}(R-\frac{1}{2})}\|P^+\phi_{xy}\|^2dxdy\leq BC(\delta)\int_{ Q_{(a,b)}(R-\frac{1}{2})}\int_{Q^c_{(a,b)}(R-\frac{1}{4})}|G(x-t,y-s)|^2dtdsdxdy.
\]
Combining these estimates we obtain,
\[
|\L\cap Q_{(a,b)}(R)|\geq (2R-1)^2-BC(\delta)\int_{ Q_{(a,b)}(R-\frac{1}{2})}\int_{Q^c_{(a,b)}(R-\frac{1}{4})}|G(x-t,y-s)|^2dtdsdxdy.
\]
The result for frames now follows from a simple change of variables in the integral. \[\qquad\qquad\qquad\qquad\qquad\qquad\qquad\qquad\qquad\qquad\qquad\qquad\qquad\qquad\qquad\qquad\qquad\qquad\qed\]

\section{Uniform minimality, a proof for Theorem \ref{um thm}}

In this section we use Lemma \ref{anotherpoint} to prove Theorem \ref{um thm}. This proof follows the corresponding proof from \cite{NO} and presents a different aspect of the approach in this paper.

Fix $\epsilon > 0$ and choose $b > 0$ such that
\begin{equation}\label{eps}
\int_{Q^c_0(b)}|V_{\phi}g(x,y)|^2dxdy<\epsilon^2.
\end{equation}

Since $G(g,\L)$ is uniformly minimal, by Lemma \ref{um-dual} there exists a bounded system $\{h^{\l\mu}\}$ such that the two systems are biorthogonal. Denote the bound of this system by $B$. (Note that in general $\{h^{\l\mu}\}$ is not a Gabor system but rather is just a sequence of functions indexed by $\L$).

For $(a,b)\in \R^2$ and $R>0$, consider the space
\[
 W = \textrm{span}\{g_{\l\mu} : (\l,\mu) \in\L\cap Q_{(a,b)}(R)\}
 \]and denote by
$P=P_W$ the orthogonal projection from $L^2(\R)$ onto this space. The (finite) systems $\{g_{\l\mu}\}_{\L\cap Q_{(a,b)}(R)}$ and $\{Ph^{\l\mu}\}_{\L\cap Q_{(a,b)}(R)}$ are biorthogonal Riesz bases in $W$. Denote $G_{\l\mu}=V_{\phi}g_{\l\mu}$ and $H_{\l\mu}=V_{\phi}(Ph^{\l\mu})$. By Lemma \ref{anotherpoint} we have,
\[
 0\leq \sum_{\L\cap Q_{(a,b)}(R)} G_{\l\mu}(x, y)\overline{H_{\l\mu}(x, y)} \leq 1,\qquad\forall (x, y)\in \R^2.
\]
Integrating over $Q_{(a,b)}(R+b)$, we obtain
\begin{equation}\label{umest}
\sum_{\L\cap Q_{(a,b)}(R)}\int_{Q_{(a,b)}(R+b)} G_{\l\mu}(x, y)\overline{H_{\l\mu}(x, y)}dxdy\leq (2(R+b))^2.
\end{equation}
Since the STFT is unitary and the systems are biorthogonal,
\begin{equation}\label{umest2}
\int_{\R^2} G_{\l\mu}(x, y)\overline{H_{\l\mu}(x, y)}dxdy=\langle g_{\l\mu},Ph^{\l\mu}\rangle_{L^2(\R)}=\langle g_{\l\mu},h^{\l\mu}\rangle_{L^2(\R)}=1.
\end{equation}
Substituting in (\ref{umest}) and using the triangle inequality we have
\[
|\L\cap Q_{(a,b)}(R)|-\sum_{\L\cap Q_{(a,b)}(R)}\big|\int_{Q^c_{(a,b)}(R+b)} G_{\l\mu}(x, y)\overline{H_{\l\mu}(x, y)}dxdy\big|
\]
\begin{equation}\label{thisone}
\leq (2(R+b))^2.
\end{equation}
Since the dual system is bounded by $B$, $$\|H_{\l\mu}\|^2_{L^2(\R^2)}=\|Ph^{\l\mu}\|^2_{L^2(\R)}\leq B^2.$$ So, Cauchy inequality and Lemma \ref{basic} imply that each summand on the left hand side satisfies,
\begin{align*}
\big|\int_{ Q^c_{(a,b)}(R+b)} G_{\l\mu}(x, y)\overline{H_{\l\mu}(x, y)}dxdy\big|^2&\leq
B^2\int_{Q^c_{(a,b)}(R+b)}| G(x-\l, y-\mu)|^2dxdy\\
&\leq B^2\int_{Q^c_{0}(b)}| G(x, y)|^2dxdy,
\end{align*}
where the last inequality follows from the fact that $(\l,\mu)\in\L\cap Q_{(a,b)}(R)$. Now, (\ref{eps}) implies that the last expression is less then $B^2\epsilon^2$. Substituting in (\ref{thisone}), we get
\[
(1-B\epsilon)|\L\cap Q_{(a,b)}(R)|\leq (2(R+b))^2.
\]
Dividing both sides by $(2R)^2$ and letting first $R$ tend to infinity and then $\epsilon$ tend to zero, we obtain the required result.

Finally, in the following proposition we use a result from \cite{OU} to show that in general, the 'uniformity' condition is necessary for Theorem 2 to hold.

\begin{proposition}
Fix $d>0$. There exist $g\in L^2(\R)$ and $\L\subset\R^2$ with $D^+(\L)>d$ such that $G(g,\L)$ is minimal.
\end{proposition}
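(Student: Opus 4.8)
The plan is to realize the counterexample on a single horizontal line, where the Gabor system collapses to a pure system of translates and the question becomes one--dimensional. Concretely, I would look for a set of the form $\L=\L_0\times\{0\}$ with $\L_0\subset\R$, so that
\[
G(g,\L)=\{\mathcal{M}_0\mathcal{T}_{\l}g\}_{\l\in\L_0}=\{g(\cdot-\l)\}_{\l\in\L_0}
\]
is a system of pure translates. The first step is the standard Fourier reduction: since $\mathcal F$ is unitary and $\widehat{\mathcal{T}_{\l}g}(w)=e^{-2\pi i\l w}\hat{g}(w)$, the translates $\{g(\cdot-\l)\}_{\l\in\L_0}$ are minimal in $L^2(\R)$ if and only if the exponentials $\{e^{-2\pi i\l w}\}_{\l\in\L_0}$ are minimal in the weighted space $L^2(|\hat{g}(w)|^2\,dw)$, the isometry $F\mapsto F\hat{g}$ identifying the two minimality problems (restricting to the set $\{\hat{g}\neq 0\}$ if necessary). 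Thus it suffices to produce a weight $|\hat{g}|^2$ and a frequency set $\L_0$ for which this weighted exponential system is minimal while $\L_0$ is dense enough that $D^+(\L_0\times\{0\})>d$.

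The second step is where the result of \cite{OU} enters. The crucial point is that minimality alone, without the uniform minimality of Theorem \ref{um thm}, permits $\L_0$ to cluster: I would choose $\L_0$ \emph{not} uniformly discrete, with $|\L_0\cap[j,j+1]|\to\infty$ as $j\to\infty$, so that its local one--dimensional density grows without bound. This clustering forces $\hat{g}$ to have full support, since an entire function of finite exponential type cannot have clustering zeros; if $\hat{g}$ were compactly supported the associated Paley--Wiener theory would cap the density of any minimal $\L_0$. With $\hat{g}$ supported on all of $\R$ and decaying, the relevant (weighted Paley--Wiener) theory permits minimal exponential systems of arbitrarily large density, and I would invoke the construction of \cite{OU} to obtain the matching pair $(|\hat{g}|^2,\L_0)$, thereby producing a single $g\in L^2(\R)$ for which $\{g(\cdot-\l)\}_{\l\in\L_0}$ is minimal.

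The final step is the density count, which is routine once $\L_0$ is in hand: placing a cube over the far--out, densely clustered part of the $x$--axis gives
\[
|\L\cap Q_{(a,b)}(R)|\ \ge\ \sum_{a-R\le j\le a+R}|\L_0\cap[j,j+1]|,
\]
and since $|\L_0\cap[j,j+1]|\to\infty$ the maximum over centers $(a,b)$ is unbounded for every fixed $R$, so $D^+(\L)=+\infty>d$ (if a finite target is preferred, one simply tunes the growth of $|\L_0\cap[j,j+1]|$ to land just above $d$). The main obstacle is the tension concentrated in the second step: the density requirement forces $\L_0$ to cluster, while clustering is exactly what destroys minimality for the easily understood generators, as neighbouring translates become nearly parallel and the biorthogonal functions blow up. Controlling this trade--off --- exhibiting one $g\in L^2(\R)$ whose (necessarily non--uniformly--minimal) translate system survives unbounded clustering --- is the heart of the matter and is precisely what the cited result of \cite{OU} supplies. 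I would expect the only delicate verification left to us to be checking that the object furnished by \cite{OU} genuinely lies in $L^2(\R)$ and that its frequency set meets the stated density bound, both of which are direct.
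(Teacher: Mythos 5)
Your overall strategy---flattening the problem onto a single horizontal line and converting it into a question about minimal systems of translates---contains a genuine gap, and it sits exactly where you place all the weight: the appeal to \cite{OU}. You correctly observe that for a set $\L=\L_0\times\{0\}$ lying on a line to have two-dimensional upper Beurling density exceeding $d$, the one-dimensional counts must be superlinear, i.e.\ $\L_0$ must have unbounded local density (clustering). You also correctly observe that this rules out any $g$ with compactly supported $\hat g$, since then the biorthogonal functionals would be Paley--Wiener functions whose real zeros have finite upper density. But the result you then invoke from \cite{OU} does not exist there: \cite{OU} is a theorem about Bernstein and Paley--Wiener spaces, i.e.\ precisely about exponentials with compactly supported spectrum, and what it provides is a set of \emph{finite} upper density strictly greater than the critical density $1$ whose exponential system is minimal in $L^2(-a,a)$. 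It says nothing about weighted spaces $L^2(|\hat g|^2\,dw)$ with full-support weights, and nothing about minimal exponential systems over sets with unbounded local density. So the single ingredient your proof needs---a pair $(g,\L_0)$ with $\L_0$ clustering without bound and $\{g(\cdot-\l)\}_{\l\in\L_0}$ minimal in $L^2(\R)$---is left entirely unproved, and it is not a routine strengthening: whether a minimal system of translates of one $L^2$ function can be indexed by a set of unbounded local density is a delicate question in its own right (the biorthogonal norms must blow up along the clusters, and for concrete windows such as the Gaussian it becomes a zero-set problem for Fock-type entire functions). Deferring this to a citation that does not contain it is the gap.

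The paper's own proof shows how to obtain the density amplification without ever needing a clustering set. It uses the \cite{OU} set $\L$ (finite upper density $>1$, exponentials minimal in $L^2(-a,a)$) as the \emph{modulation} set, takes $g=1\!\!1_{[-a,a]}$, and takes the planar index set $K=(2a\Z)\times\L$. Because the translates of $1\!\!1_{[-a,a]}$ along $2a\Z$ have essentially disjoint supports, $L^2(\R)$ splits into an orthogonal sum over the intervals $[2an-a,2an+a]$, and minimality of $G(g,K)$ reduces to minimality of the exponentials on each interval separately; meanwhile the two-dimensional density of the product set is at least $D^+(\L)/(2a)>1/(2a)$, which exceeds $d$ once $a<1/(2d)$. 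In other words, the large planar density is manufactured from two sets of perfectly finite linear density via the product structure---precisely the move your one-line reduction forecloses.
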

\begin{proof}
Let $a > 0$ be arbitrarily small. It is shown in \cite{OU} that there exists $\L\subset\R$ such that
$\{e^{2\pi i \l t}\}_{\l\in\L}$ is minimal in $L^2(-a,a)$ and $D^+(\L) >1$ (where $D^+(\L)$ is the upper density of $\L$ as a subset of the line). Let $K =(2a\Z)\times \L$ and $g = 1\!\!1_{[-a,a]}$ be the indicator of the interval ${[-a,a]}$. It follows that
$G(g,K)$ is minimal in $L^2(\R)$ and that $D^+(K) \geq 1/(2a)$, which can be made arbitrarily
large by choosing $a$ small enough.
\end{proof}

 \section{Lattice index set, a proof for Theorem \ref{lattice-thm}}

 In this section we prove Theorem \ref{lattice-thm}. To this end, recall the notations $\mathcal{M}_a$ and $\mathcal{T}_b$ defined in Subsection 2.1. Note that, given $a,b\in \R$, there exists $\xi=\xi(a,b)\in\C$, with $|\xi|=1$, such that
 \[
 \mathcal{M}_a\mathcal{T}_bg=\xi\mathcal{T}_b\mathcal{M}_ag\qquad \forall g\in L^2(\R).
 \]
 \subsection{A proof for Theorem \ref{lattice-thm} part 1}
 Let $v=(v_1,v_2),w=(w_1,w_2)\in\R^2$ be such that $\L=\{nv+kw:(n,k)\in\Z^2\}$ Denote the elements of $G(g,\L)$ by
 \[
 g^{nk}:=\mathcal{M}_{nv_2+kw_2}\mathcal{T}_{nv_1+kw_1}g,\qquad (n,k)\in\Z^2.
 \]
 Since the system $G(g,\L)$ is minimal it has a dual system; in particular, there exists a function $h\in L^2(\R)$ such that
 $ \langle g,h \rangle =1$
and $ \langle g^{nk},h \rangle = 0$ for all $(n,k)\neq (0,0)$ in $\Z^2$.

Define
\[
h^{nk}:=\mathcal{M}_{nv_2+kw_2}\mathcal{T}_{nv_1+kw_1}h,\qquad (n,k)\in\Z^2.
\]
Note that  $ \langle g^{nk},h^{nk}\rangle =\langle g,h \rangle =1$, while for $(m,l)\neq (n,k)$ there exists $\xi:=\xi(m,l,n,k)$ such that
\begin{align*}
\langle g^{ml},h^{nk}\rangle & =
\xi\langle \mathcal{M}_{(m-n)v_2+(l-k)w_2}\mathcal{T}_{(m-n)v_1+(l-k)w_1}g,h\rangle=0.
\end{align*}
 It follows that $\{ g^{nk}\}$ and $\{ h^{nk}\}$ are biorthogonal sequences. Since for all $(n,k)\in\Z^2$ we have $\|h^{nk}\|=\|h\|$, the system $\{ h^{nk}\}$ is bounded, which by Lemma \ref{um-dual} implies that $G(g,\L)$ is uniformly minimal. Theorem \ref{um thm} implies now that $D^+(\L)\leq 1$.
 \subsection{A proof for Theorem \ref{lattice-thm} part 2}
Let $v,w\in\R^2$ be such that $\L=\{nv+kw:(n,k)\in\Z^2\}$ and denote $A=\{\xi v+\eta w:(\xi,\eta)\in[0,1]^2\}$.

$\:$

\textbf{Step I.} Using the terminology of \cite{RS}, in this step we prove that $G(g,\L)$ has the \textit{homogeneous approximation property}, in much the same way as is done in \cite{RS}.

 Fix $\epsilon>0$. Since $G(g,\L)$ is complete and $A$ is compact, there exists $d'>0$ such that for all $(x,y)\in A$ the function $\phi_{xy}$ can be approximated within $\sqrt{\epsilon}$ by a function from
 \[
 \textrm{span}\{g_{\l\mu}:(\l,\mu)\in\L\cap Q_0(d')\}.
 \]
Since $\L$ is a lattice, it follows that there exists $d>0$ such that for all $(x,y)\in \R^2$ the function $\phi_{xy}$ can be approximated up to $\sqrt{\epsilon}$ by a function from
\[
 \textrm{span}\{g_{\l\mu}:(\l,\mu)\in\L\cap Q_{(x,y)}(d)\}.
\]

$\:$

\textbf{Step II.} In the terminology of \cite{RS}, in this step we prove that if $G(g,\L)$ has the \textit{homogeneous approximation property} then $D^-(\L)\geq1$.

Let $d$ be the number chosen in Step I and fix $(a,b)\in\R^2$ and $R>d$. Denote
\[
W=\textrm{span}\{{g}_{\l\mu}:{(\l,\mu)}\in\L\cap Q_{(a,b)}(R)\},
\]
and let $P$ be the orthogonal projections of $L^2(\R)$ onto W.
Clearly,
\begin{equation}\label{dimcomp}
\mathrm{dim}\,W\leq |\L\cap Q_{(a,b)}(R)|.
\end{equation}
It follows from our choice of $d$ that for all $(x,y)\in Q_{(a,b)}(R-d)$ we have
\begin{equation}\label{projcomp}
\|P\phi_{xy}\|^2\geq 1-\epsilon.
\end{equation}
 Integrating over $Q_{(a,b)}(R-d)$, we obtain
\[
(1-\epsilon)(2R-2d)^2\leq\int_{Q_{(a,b)}(R-d)}\|P\phi_{xy}\|^2dxdy.
\]
By Lemma \ref{point} and (\ref{dimcomp})
\[
\int_{Q_{(a,b)}(R-d)}\|P\phi_{xy}\|^2dxdy\leq\mathrm{dim}\,W\leq |\L\cap Q_{(a,b)}(R)|.
\]
Combining these we obtain
\[
(1-\epsilon)(2R-2d)^2\leq |\L\cap Q_{(a,b)}(R)|.
\]
Dividing by $(2R)^2$ and letting first $R$ tend to infinity and then $\epsilon$ tend to zero, the result follows.

\section{Finite sets of generators}

In this section we formulate a version of our results in the case that the system considered is generated by a finite number of generators. To avoid repetition we omit the proofs of these theorems as they are much the same as in the case of a single generator.

\subsection{Definitions}
We will use the following definitions. Let $\{\L_n\}_{n=1}^N$ be a finite family of sequences $\L_n\subset \R^2$. We say that $\{\L_n\}$ is a uniformly discrete family if each $\L_n$ is uniformly discrete.

For a uniformly discrete family $\{\L_n\}$ the following limits exist,
\[
D^+(\{\L_n\}):=\lim_{R\rightarrow\infty}\frac{\max_{(a,b)\in\R^2}\sum_{n=1}^N|\L_n\cap Q_{(a,b)}(R)|}{(2R)^2};
\]
and
\[
D^-(\{\L_n\}):=\lim_{R\rightarrow\infty}\frac{\min_{(a,b)\in\R^2}\sum_{n=1}^N|\L_{{n}}\cap Q_{(a,b)}(R)|}{(2R)^2}.
\]

We call $D^-(\{\L_n\})$ and $D^+(\{\L_n\})$ the lower and upper densities of the family $\{\L_n\}$ respectively.

\subsection{Results}
We have the following extension of Theorem 4.
\begin{theorem4'}\label{main'}
Let $g_1,...,g_N$ be distinct functions in $L^2(\R)$ and let $\{\L_n\}_{n=1}^N$ be a uniformly discrete family of real sequences. Denote $G_n:=V_{\phi}g_n$, $1\leq n\leq N$. There exists $C>0$ such that,
\begin{itemize}
\item[1.] If $\bigcup_{n=1}^N G(g_n,\L_n)$ is a Riesz sequence then
\[
\sup_{(a,b)\in\R^2}\sum_{n=1}^N|\L_n\cap Q_{(a,b)}(R)|\leq  (2R+1)^2+C\sum_{n=1}^NI_{G_n}(R+\frac{1}{4}).
\]
\item[2.] If $\bigcup_{n=1}^N G(g_n,\L_n)$ is a frame then
\[
\inf_{(a,b)\in\R^2}\sum_{n=1}^N|\L_n\cap Q_{(a,b)}(R)|\geq  (2R-1)^2-C\sum_{n=1}^NI_{G_n}(R-\frac{1}{2}).
\]
\end{itemize}
\end{theorem4'}

A similar extensions of Theorems A follows.

\begin{theorema'}
Let $g_1,...,g_N$ be distinct functions in $L^2(\R)$ and let $\{\L_n\}_{n=1}^N$ be a uniformly discrete family of real sequences.
\begin{itemize}
\item[1.] If $\bigcup_{n=1}^N G(g_n,\L_n)$ is a Riesz sequence then
$
D^+(\{\L_n\})\leq 1,
$
\item[2.] If $\bigcup_{n=1}^N G(g_n,\L_n)$ is a frame then
$
D^-(\{\L_n\})\geq 1.
$
\end{itemize}
\end{theorema'}

\textbf{Remark 2. }Suppose that in part 1 of Theorem A' we had considered the quantity
$\sum_{n=1}^N D^+(\L_n)$ instead of the upper
density $D^+(\{\L_n\})$. Since the subset of a RS is itself a RS, Theorem A easily implies that
\[
D^+(\{\L_n\})\leq \sum_{n=1}^N D^+(\L_n)\leq N.
\]
However, the estimate on the right hand side is the best that can be made. As an example of this fact consider the sets
\[
\L^*_n:=\left\{(x,y)\in \Z^2:\textrm{Arg}(x,y)\in \left(\frac{2\pi (n-1)}{N},\frac{2\pi n}{N}\right]\right\}.\qquad n=1,...,N
\]
Then the distinct functions $g_n:=e^{2\pi i nt}1\!\!1_{[0,1]}$ and the family $\L_n:=\L^*_n-(0,n)$.

$\:$

In addition, we have the following extensions of Theorems 2 and 3.
\begin{theorem2'}\label{um thm'}
If $\bigcup_{n=1}^N G(g_n,\L_n)$ is uniformly minimal then $
D^+(\{\L_n\})\leq 1
$. In general, the conclusion does not hold if the system merely is minimal.
\end{theorem2'}

\begin{theorem3'}\label{lattice-thm'}
Let $g_1,...,g_N\in L^2(\R)$, $\L\subset \R^2$ be a lattice and $x_1,...,x_N$ be distinct points in $\R^2$. Denote $\L_n:=x_n+\L$. Then,
\begin{itemize}
\item[1.] If $\bigcup_{n=1}^N G(g_n,\L_n)$ is minimal then it is uniformly minimal and $D(\L)\leq 1/N$.
\item[2.] If $\bigcup_{n=1}^N G(g_n,\L_n)$ is complete then $D(\L)\geq 1/N$.
\end{itemize}
\end{theorem3'}

\section{Acknowledgments}
The work on this project was done during an REU program at Kent State University in the summer of 2014. We wish to thank Kent State University and in particular the organizer Jenya Soprunova for an excellent REU program and for providing us with the opportunity to meet and work on this project.

\bibliographystyle{amsplain}
\def\cprime{$'$} \def\cprime{$'$} \def\cprime{$'$}
\providecommand{\bysame}{\leavevmode\hbox to3em{\hrulefill}\thinspace}
\providecommand{\MR}{\relax\ifhmode\unskip\space\fi MR }
\providecommand{\MRhref}[2]{%
  \href{http://www.ams.org/mathscinet-getitem?mr=#1}{#2}
}
\providecommand{\href}[2]{#2}

  \end{document}